\newcommand*{\transpose}{%
  {\mathpalette\@transpose{}}%
}
\newcommand*{\@transpose}[2]{%
  \raisebox{\depth}{$\m@th#1\intercal$}%
}
\def\@author#1{\g@addto@macro\elsauthors{\normalsize%
\def\baselinestretch{1}%
\upshape\authorsep#1\unskip\textsuperscript{%
\ifx\@fnmark\@empty\else\unskip\sep\@fnmark\let\sep=,\fi
\ifx\@corref\@empty\else\unskip\sep\@corref\let\sep=,\fi}%
\def\authorsep{\unskip,\space}%
\global\let\@fnmark\@empty
\global\let\@corref\@empty  
\global\let\sep\@empty}%
\@eadauthor={#1}
}
\def\dj{d\kern-0.4em\char"16\kern-0.1em}
\DeclareMathOperator{\tr}{tr}
\DeclareMathOperator{\bd}{bd}
\DeclareMathOperator{\md}{mod}
\newtheorem{theorem}{Theorem}[section]
\newtheorem{lemma}[theorem]{Lemma}
\newtheorem{corollary}[theorem]{Corollary}
\theoremstyle{definition}
\newtheorem{example}[theorem]{Example}
\newtheorem{question}[theorem]{Question}
\tikzstyle{vertex}=[circle, draw, fill=black, inner sep=0pt, minimum width=4pt]
\tikzstyle{pedge}=[draw,-]
\tikzstyle{nedge}=[draw,densely dashed]
\journal{the journal.}
\begin{document}

\begin{frontmatter}


\title{\bf Signed $(0,2)$-graphs with few eigenvalues and \\ a symmetric spectrum }

\author{Gary R. W. Greaves}
\ead{gary@ntu.edu.sg}
\address{School of Physical and Mathematical Sciences,
  Nanyang Technological University \\
   21 Nanyang Link, Singapore 637371}

\author{Zoran Stani\' c}
\ead{zstanic@math.rs}
\address{Faculty of Mathematics, University of Belgrade \\
Studentski trg 16, 11 000 Belgrade, Serbia}
\begin{abstract} 
We investigate properties of signed graphs that have few distinct eigenvalues together with a symmetric spectrum.
Our main contribution is to determine all signed $(0,2)$-graphs with vertex degree at most $6$ that have precisely two distinct eigenvalues $\pm \lambda$.
Next, we consider to what extent induced subgraphs of signed graph with two distinct eigenvalues $\pm \lambda$ are determined by their spectra.
Lastly, we classify signed $(0,2)$-graphs that have a symmetric spectrum with three distinct eigenvalues and give a partial classification for those with four distinct eigenvalues.
\end{abstract}

\begin{keyword}
  {signed $(0, 2)$-graph \sep rectagraph \sep adjacency matrix \sep symmetric spectrum \sep weighing matrix \sep bipartite double\sep (folded) cube} 

\MSC 05C22 \sep 05C50
\end{keyword}

\end{frontmatter}

\section{Introduction}

A \textit{signed graph} $\dot{G}=(G, \sigma)$ is comprised of a graph $G=(V, E)$ (called the \emph{underlying graph} {of $\dot G$}) together with a \textit{signature} function $\sigma\colon E\longrightarrow\{1, -1\}$. The number of vertices of $\dot{G}$ is called the \textit{order} and  {is} denoted by $n$. 
 {An edge $e \in E$ of $\dot G$ is called \emph{positive} (resp. \emph{negative}) if $\sigma(e) = +1$ (resp. $\sigma(e) = -1$).}
We  { can} interpret an  {(unsigned)} graph as a signed graph with all {its} edges being positive.

The \textit{adjacency matrix} $A_{\dot{G}}$ of $\dot{G}$ is obtained from the adjacency matrix of its underlying graph by reversing the sign of all 1s
that correspond to negative edges.
The \textit{eigenvalues} of $\dot{G}$ are identified with the eigenvalues of~$A_{\dot{G}}$, and the \textit{spectrum} of $\dot{G}$ is the multiset of its eigenvalues. 
Throughout the paper, by the statement `$\dot G$ has $k$ eigenvalues' we mean that $\dot G$ has exactly $k$ \emph{distinct} eigenvalues.

A \textit{signed $(0, 2)$-graph} is a connected signed graph such that any two vertices have 0 or 2 common neighbours. A triangle-free signed $(0, 2)$-graph is called a \textit{signed rectagraph}. These definitions cover unsigned graphs, as well. Moreover, the entire terminology is transferred from the framework of unsigned graphs~\cite{Bro}. Evidently, $\dot{G}$ is a signed $(0, 2)$-graph (resp.~signed rectagraph) if and only if its underlying graph $G$ is a $(0, 2)$-graph (resp.~rectagraph).

Signed graphs with exactly two eigenvalues have attracted a great deal of attention in recent years. 
They played a central role in the recent breakthrough paper of Huang~\cite{huang}.
Such signed graphs are regular, moreover strongly regular in the sense of~\cite{StaSRSG}, and they are closely related to line systems with maximum number of lines that are either mutually orthogonal or at fixed angle~\cite{Sta2}. 
For various work on signed graphs with just two eigenvalues see \cite{Ho...,McSm,Sta2}. 
See \cite{Bel} for a collection of open problems for signed graphs and see \cite{Gho,Sta3} for some recent results on signed graphs with a symmetric spectrum.

In this paper, we are interested in signed graphs $\dot G$ that have few eigenvalues and also have a symmetric spectrum, i.e., if $\lambda$ is an eigenvalue of $\dot G$ then so is $-\lambda$.
We begin by showing that if $G$ is a $(0,2)$-graph with at most three eigenvalues then $G$ must be a rectagraph.
We then consider signed rectagraphs with spectrum $\{[-\lambda]^m, [\lambda]^m\}$ and we establish some structural and spectral properties of these signed graphs and fully determine all with vertex degree at most $6$. We also provide a partial result related to those with vertex degree $7$ and answer some questions that arise from the previous considerations.
These results are reported in Sections~\ref{sec:prop} and \ref{sec:small}.
In Section~\ref{sec:sub}, we consider induced subgraphs of a signed graph $\dot G$ with spectrum $\{[-\lambda]^m, [\lambda]^m\}$ obtained by deleting one or two vertices and investigate to what extent these subgraphs are determined by their spectra.
Finally, in Section~\ref{sec:few}, we classify signed $(0,2)$-graphs that have symmetric spectra with three eigenvalues and signed $(0,2)$-graphs that have spectrum $\{[-\lambda]^m, [-\mu]^1, [\mu]^1, [\lambda]^m\}$ for {$1 \leqslant \mu < \lambda$}.

\section{Terminology and notation}\label{sec:prel}

In this section, we provide some of the basic terminology and the corresponding notation - some additional notions and known results are given in the forthcoming sections. For undefined notions, we refer the reader to \cite{CRS}.
Throughout, we use $J_n$, $I_n$, $O_n$ to denote the all-$1$, the identity, and the zero matrix, respectively, of order $n$.
The subscript may be omitted when the size of the matrix is clear. 

We say that a signed graph is connected, regular or bipartite if the same holds for its underlying graph. We denote the vertex degree of a regular signed graph by $r$. Throughout the paper, we abbreviate a signed rectagraph whose spectrum consists of precisely two eigenvalues $\pm \lambda$ to SR2SE. We indicate its order and vertex degree by writing $(n, r)$-SR2SE. 
Indeed, $(0,2)$-graphs are known to be regular.

\begin{lemma}[{\cite[Proposition 2.1.2]{mulder}}]
    \label{lem:reg}
    All $(0,2)$-graphs are regular.
\end{lemma}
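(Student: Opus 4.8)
The plan is to use connectivity: since the underlying graph $G$ is connected, it suffices to show that any two adjacent vertices have the same degree. So fix an edge $uv$ and set $C = N(u) \cap N(v)$, so that $|C| \in \{0, 2\}$ by hypothesis, and write $A = N(u) \setminus (C \cup \{v\})$ and $B = N(v) \setminus (C \cup \{u\})$. Then $\deg u = 1 + |C| + |A|$ and $\deg v = 1 + |C| + |B|$, so the whole problem reduces to showing $|A| = |B|$.

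The main device is a ``replace the other common neighbour'' map. If $a \in A$, then $a \not\sim v$, yet $u$ is a common neighbour of $a$ and $v$; hence $a$ and $v$ have exactly two common neighbours, namely $u$ and a second vertex $\phi(a)$, and necessarily $\phi(a) \sim a$ and $\phi(a) \in C \cup B$. Define $\psi$ on $B$ symmetrically. I would first check that $\phi$ is injective: if $\phi(a) = \phi(a') = z$ with $a \neq a'$, then $v$, $a$, and $a'$ are three distinct vertices lying in $N(z) \cap N(u)$ (all three are neighbours of $u$, and $z$ is adjacent to each of them), contradicting $|N(z) \cap N(u)| \leqslant 2$. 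Next I would pin down the image. For $|C| = 0$ there is nothing more to do: $\phi$ is then a bijection $N(u) \setminus \{v\} \to N(v) \setminus \{u\}$ and we are done. For $|C| = 2$, say $C = \{x, y\}$, I would count the common neighbours of $x$ with $u$ and with $v$ to conclude that the number of vertices of $A$ adjacent to $x$ equals the number of vertices of $B$ adjacent to $x$ (both being $0$ if $x \sim y$ and $1$ otherwise), and likewise for $y$; and then show that $\phi$ restricts to a bijection, with inverse $\psi$, between the set of vertices of $A$ adjacent to neither element of $C$ and the set of vertices of $B$ adjacent to neither element of $C$. Summing the three contributions gives $|A| = |B|$.

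The case $|C| = 0$ is routine, and I expect $|C| = 2$ to be the real obstacle: there the map $\phi$ can send a vertex of $A$ into $C$ rather than into $B$, so there is no clean pairing of $N(u)$ with $N(v)$, and one has to bookkeep carefully how the two common neighbours of $u$ and $v$ are joined to the rest of the two neighbourhoods. Each of these small adjacency deductions is itself an instance of the basic principle that a pair of vertices which already has two common neighbours cannot acquire a third.
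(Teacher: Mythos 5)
Your argument is correct. Note that the paper gives no proof of this lemma at all --- it is quoted verbatim from Mulder's thesis --- so there is nothing internal to compare against, and I assess the proposal on its own terms. The reduction to adjacent vertices via connectivity, the injectivity of the ``second common neighbour'' map $\phi$, the immediate conclusion when $|C|=0$, and the counts (number of vertices of $A$ adjacent to $x$) $=$ (number of vertices of $B$ adjacent to $x$) $= 1 - [x\sim y]$ are all right. The one step you state but do not carry out --- that $\phi$ restricts to a bijection between the vertices of $A$ adjacent to neither $x$ nor $y$ and the corresponding subset of $B$ --- does go through, and by exactly the principle you invoke at the end: if $a\in A$ is adjacent to neither $x$ nor $y$, then $\phi(a)\notin C$, so $\phi(a)=b\in B$; and if $b$ were adjacent to $x$, the pair $u,b$ would have the three common neighbours $v$, $x$, $a$, a contradiction; moreover $\psi(b)=a$ because $v$ and $a$ already exhaust the two common neighbours of $u$ and $b$. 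The only bookkeeping point left implicit is that no vertex of $A$ (or of $B$) can be adjacent to both $x$ and $y$ --- again because $x$ and $y$ already have $u$ and $v$ as their two common neighbours --- which is what makes your ``three contributions'' to $|A|$ genuinely disjoint and lets the final summation close the argument.
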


A \textit{weighing matrix} $M$ of order $n$ and weight $r$ is an $n\times n$ $\{0, \pm 1\}$-matrix satisfying $M^\transpose M=rI$. 
{We write $(n, r)$-weighing matrix to indicate the order $n$ and the weight $w$.}
We say that two rows of a weighing matrix {$M$} intersect in $k$ places if their non-zero entries match in exactly $k$ positions. 
{ Each number $k$ such that there exist two rows of $M$ that intersect in $k$ places is called an \textit{intersection number} of $M$.}

A weighing matrix $Q$ of weight $1$ is also known as $\{0, \pm 1\}$-monomial matrix or a \textit{signed permutation matrix}.
We say that two $n \times n$ $\mathbb Z$-matrices $A$ and $B$ are \emph{switching isomorphic} if there exists a signed permutation matrix $Q$ such that $B = Q^\transpose A Q$.
If two signed graphs $\dot G$ and $\dot H$ have switching isomorphic adjacency matrices then we say that $\dot G$ and $\dot H$ are \emph{switching isomorphic}.
Since switching isomorphism is a similarity relation, it is clear that two switching isomorphic signed graphs must have the same spectrum.
Accordingly, switching isomorphic signed graphs are often identified.


Similarly, weighing matrices $M$ and $N$ are said to be \textit{equivalent} if there are signed permutation matrices $P$ and $Q$ such that $M=PNQ$. Again, equivalent weighing matrices are often identified. 
{ Observe that two inequivalent weighing matrices $M_1$ and $M_2$ of weight $r$ give rise to an infinite family of inequivalent weighing matrices of the same weight. 
Indeed, any direct sum of matrices equivalent to $M_1$ or $M_2$ is also a weighing matrix of weight $r$.}
{ We say that a weighing matrix is \textit{proper} if it is not equivalent to a direct sum of two weighing matrices.
Similarly, the equivalence class of a weighing matrix is called \emph{proper} if it contains a proper weighing matrix.}

If the spectrum of a signed graph $\dot{G}$ consists of two symmetric eigenvalues $\pm\lambda$, then the minimal polynomial of its adjacency matrix is $A^2-\lambda^2 I$, which means that $A_{\dot{G}}^2$ is a weighing matrix of weight $\lambda^2$ and the vertex degree of $\dot{G}$ is $\lambda^2$.
The \textit{negation} $-\dot{G}$ is obtained by reversing the sign of every edge of $\dot{G}$.

\section{Properties of SR2SEs}\label{sec:prop}

We start by proving a result that restricts our considerations from signed $(0, 2)$-graphs to signed rectagraphs.

\begin{theorem}
    \label{thm:trianglefree}
    Let $\dot G$ be a signed $(0,2)$-graph with a symmetric spectrum and at most $3$ eigenvalues.
    Then $\dot G$ is a signed rectagraph.
\end{theorem}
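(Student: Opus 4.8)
The plan is to split on the number of distinct eigenvalues. If $\dot G$ has only one eigenvalue, then $A_{\dot G}=O$ (the adjacency matrix has zero diagonal), so $n=1$ and $\dot G$ is vacuously a signed rectagraph. Since the spectrum is symmetric, the remaining possibilities are spectrum $\{[-\lambda]^{m_1},[\lambda]^{m_2}\}$ or $\{[-\lambda]^{m_1},[0]^{m_0},[\lambda]^{m_2}\}$ for some $\lambda>0$. Write $A=A_{\dot G}$; by Lemma~\ref{lem:reg}, the underlying $(0,2)$-graph is $r$-regular for some $r$. In both remaining cases the minimal polynomial of $A$ divides $x(x^{2}-\lambda^{2})$, so $A^{3}=\lambda^{2}A$; in the two-eigenvalue case we moreover have $A^{2}=\lambda^{2}I$, and comparing diagonal entries gives $\lambda^{2}=r$. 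Assume, for contradiction, that $\dot G$ contains a triangle $uvw$.

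Next I would record two elementary structural facts about a $(0,2)$-graph $G$. First, for any vertex $x$ the \emph{link} $L(x)$ (the subgraph induced on $N(x)$) has all degrees in $\{0,2\}$: two neighbours of $x$ already have $x$ as a common neighbour, hence have exactly two; so $L(x)$ is a disjoint union of cycles and isolated vertices. Second, any edge of $G$ contained in a triangle is contained in exactly two triangles, namely those through the two common neighbours of its endpoints. In particular $vw$ is an edge of some cycle $C$ of $L(u)$, of length $\ell\geqslant 3$; and $\ell\neq 4$, since two antipodal vertices of a $4$-cycle in $L(u)$ would have (at least) the three common neighbours $u$ and the two other cycle vertices, contradicting the $(0,2)$-condition.

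In the two-eigenvalue case, $A^{2}=rI$ forces every quadrilateral of $\dot G$ to be negative: if distinct $a,b$ have a common neighbour they have exactly two, say $c,d$, and $0=(A^{2})_{ab}=A_{ac}A_{cb}+A_{ad}A_{db}$ is precisely the statement that the $4$-cycle $a\,c\,b\,d$ has sign $-1$; and every $4$-cycle of a $(0,2)$-graph arises this way. Combining this with the second fact above, if two triangles share an edge $e$ then completing $e$ to its quadrilateral shows that the two triangles have opposite \emph{sign} (the product of the signs of the three edges). Hence the assignment of signs to triangles is a proper $2$-colouring of the \emph{triangle-adjacency graph} $\mathcal T$ of $\dot G$ (vertices: triangles; edges: pairs sharing an edge of $\dot G$), which by the two facts above is $3$-regular, and the cycle $C$ of $L(u)$ gives a closed walk of length $\ell$ in $\mathcal T$ through the vertex $uvw$. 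If $\ell$ is odd this is already a contradiction. If $\ell=3$ then $\dot G$ contains a $K_4$, whose four triangles induce a $K_4$ (hence an odd cycle) in $\mathcal T$, again a contradiction. The main obstacle is to deal with even link cycles of length $\ell\geqslant 6$: here I would examine the neighbourhood of $C$, pass to the links of its rim vertices, and follow the ``second'' triangle on each rim edge, in order to exhibit an odd cycle in $\mathcal T$ (equivalently, to show that every link cycle through a triangle of a $(0,2)$-graph has odd length).

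Finally, for the three-eigenvalue case the argument above does not apply: $C_4$ has spectrum $\{-2,0,2\}$ yet its unique quadrilateral is positive, so triangles sharing an edge need not have opposite sign. Here I would instead work directly with $A^{3}=\lambda^{2}A$: the diagonal identity $(A^{3})_{xx}=0$ says the signed number of triangles through each vertex vanishes, and expanding the off-diagonal identities $(A^{3})_{vw}=\lambda^{2}A_{vw}$ along the edges of the triangle $uvw$ as signed sums over walks of length $3$, and simplifying using the $(0,2)$-condition on the admissible middle vertices, should again yield either an odd cycle in $\mathcal T$ or a contradictory count, completing the proof.
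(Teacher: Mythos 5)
Your proposal has two genuine gaps, and they are precisely the hard parts. First, in the two-eigenvalue case you reduce to showing that a link cycle of even length $\ell\geqslant 6$ cannot occur (or yields an odd cycle in $\mathcal T$), and you explicitly leave this open ("here I would examine the neighbourhood of $C$\dots"). Second, the entire three-eigenvalue case is deferred to a hoped-for computation ("should again yield\dots"); this is the more delicate case, since there $A^2\neq rI$, your quadrilateral-sign argument is unavailable, and the $2$-colouring of $\mathcal T$ breaks down, as you yourself observe with the $C_4$ example. What you have correctly established -- that links are disjoint unions of cycles and isolated vertices, that an edge in a triangle lies in exactly two triangles, that $\ell\neq 4$, and the odd-$\ell$ and $\ell=3$ contradictions under $A^2=rI$ -- is sound, but it does not close either case. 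A plan that names the remaining obstacle is not a proof of it.

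The paper's argument closes both cases at once with two observations you nearly reach but do not combine. The only identity needed is $A^3=\lambda^2 A$, which holds for both spectra; its diagonal says that for every vertex $x$, $(A^3)_{xx}=2\sum_{T\ni x}\sigma(T)=0$, where the sum is over triangles through $x$ and $\sigma(T)=\pm1$ is the product of the edge signs of $T$. This forces the number of triangles through $x$ to be \emph{even} -- a pure parity statement, no $2$-colouring of $\mathcal T$ required. The structural half is then to show that some vertex of a triangle-containing signed $(0,2)$-graph lies in an \emph{odd} number of triangles: the paper does this by taking the subgraph generated by a triangle under "every edge of a triangle lies in exactly two triangles" and identifying its underlying graph as a Platonic solid with triangular faces; the octahedron is excluded by the $(0,2)$-condition (antipodal vertices have four common neighbours, essentially your $\ell\neq4$ observation), and the tetrahedron and icosahedron put each vertex in $3$ or $5$ triangles respectively. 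In your language, this amounts to showing that every link cycle through a triangle has odd length -- exactly the statement you left unproved. If you want to complete your write-up along your own lines, replace the triangle-adjacency-graph colouring by the diagonal-of-$A^3$ parity count and supply the classification of the triangle-closure; otherwise the proposal remains an outline with its decisive steps missing.
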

\begin{proof} 
    Let $A$ be the adjacency matrix of $\dot G$.
    Then $A^3-\lambda^2A = O$ for some $\lambda \in \mathbb R$.
    Suppose for a contradiction that $\dot G$ contains a triangle.
    Since $\dot G$ is a signed $(0, 2)$-graph, we have that every edge of a triangle belongs to exactly $2$ triangles.
    It follows that $\dot G$ contains a subgraph, say $\dot H$, in which every edge belongs to exactly $2$ triangles. 
    In other words, the underlying graph $H$ is the graph of one of the three Platonic solids that contain a triangle: the tetrahedron, the octahedron or the icosahedron. 
    Moreover, the octahedron is eliminated immediately since it
contains pairs of vertices with 4 common neighbours.
Since $\dot H$ is a signed $(0, 2)$-graph, every pair of its vertices has no common neighbours in $V(\dot G) \backslash V (\dot H)$.
Note that each vertex of an icosahedron or a tetrahedron is in an odd number of triangles.
Therefore, for each of those vertices, the diagonal entries of $A^3$ cannot be $0$.
This contradicts that fact that $A^3$ is a scalar multiple of $A$.
The conclusion that $H$ is not the tetrahedron nor the icosahedron contradicts the
initial assumption that $\dot G$
contains a triangle, and we are done.
\end{proof}

\begin{figure}[htbp]
    \centering
    \begin{center}
    \begin{tikzpicture}[scale=1.5, auto]
\begin{scope}	
	\foreach \pos/\name in {{(0,0)/a}, {(2,0)/b}, {(1,1.732)/c}, {(1,0.607)/d}}
		\node[vertex] (\name) at \pos {}; 
	\foreach \edgetype/\source/ \dest in {pedge/c/a, pedge/b/d, pedge/b/a, nedge/c/d, pedge/a/d, pedge/c/b}
	\path[\edgetype] (\source) -- (\dest);
\end{scope}
\end{tikzpicture}
\end{center}
    \caption{The signed tetrahedron $\mathcal T$.}
    \label{fig:T}
\end{figure}
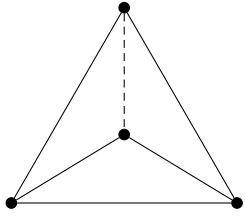

Note that there does exist a signed $(0,2)$-graph with a symmetric spectrum of four eigenvalues that is not a {signed} rectagraph.
Indeed, the signed tetrahedron $\mathcal T$ in Figure~\ref{fig:T} has spectrum $\{ [\pm 1]^1, [\pm \sqrt{5}]^1\}$.

Below, we refer to the $(r+1)\times \big({r+1\choose 2}+1+k\big)$ matrix of \eqref{schem} in which $+$ and $-$ denote 1 and $-1$, respectively, while every unspecified entry is zero. To explain its structure, the top-left submatrix corresponds to the star $K_{1, r}$ and it is followed by $(r-i+1)\times (r-i)$ submatrices (for $1\leqslant i\leqslant r-1$) in which the first row is all-1 and the remainder of the matrix is the negation of the corresponding identity matrix. The $(r+1)\times k$ submatrix at the end is all-0.
\begin{align}\label{schem}&\begin{bmatrix*} &+&+&+&+&\cdots& +&  & &&&&&&&&&&&&\\
+&&& & && &+&+&+&\cdots&+&&&&&&&&&\\
+&&& & && &-& &&& &+&+&\cdots&+&&&&&\\
+&&&&&& & &-&&& &-& &&&\cdots&&&&\\
+&&&&&& & &&-&& &&-&&&\ddots&&&&\\
\vdots&&&&&&&&&&\ddots&&&&\ddots&&&+&&&\\
+&&& & && & & &&&-&&&&-&&-&&&
\end{bmatrix*} \\
&\,~~\underbrace{\phantom{~~~~~~~~~~~~~~~~~~~~~~~~~~~~~~~~~}}_{r+1}~~ \underbrace{~~~~~~~~~~~~~~~~~~~~~~~~}_{r-1}~~\underbrace{~~~~~~~~~~~~~~~~~~~}_{r-2}~\ldots~\,\hspace{0.2mm}{\tiny\underbrace{}_{1}}~~\underbrace{~~~~~~}_{k} \nonumber
\end{align}

\smallskip

We will also deal with the following $r\times \big({r \choose 2}+1+k\big)$ matrix -- a slight modification of the previous one. 

\begin{align}\label{schemb}&\begin{bmatrix*} +&+&+&+&+&\cdots& +&  & &&&&&&&&&&&&\\
+&-&& & && &+&+&+&\cdots&+&&&&&&&&&\\
+&&-& & && &-& &&& &+&+&\cdots&+&&&&&\\
+&&&-&&& & &-&&& &-& &&&\cdots&&&&\\
+&&&&-&& & &&-&& &&-&&&\ddots&&&&\\
\vdots&&&&&\ddots&&&&&\ddots&&&&\ddots&&&+&&&\\
+&&& & &&- & & &&&-&&&&-&&-&&&
\end{bmatrix*} \\
&\,~~\underbrace{\phantom{~~~~~~~~~~~~~~~~~~~~~~~~~~~~~~~~~}}_{r}~~ \underbrace{~~~~~~~~~~~~~~~~~~~~~~~~}_{r-2}~~\underbrace{~~~~~~~~~~~~~~~~~~~}_{r-3}~\ldots~\,\hspace{0.2mm}{\tiny\underbrace{}_{1}}~~\underbrace{~~~~~~}_{k} \nonumber
\end{align}

\begin{lemma}\label{lem:schem}
	The switching isomorphism class of every SR2SE contains a signed graph such that the first $r+1$ rows of its adjacency matrix are given by \eqref{schem}.
\end{lemma}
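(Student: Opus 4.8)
\textit{Proof sketch.} The plan is to read the blocks of \eqref{schem} as a prescription for the neighbourhoods of a fixed vertex $v_0$ and its $r$ neighbours, and to realise the prescribed signs by an explicit sequence of switchings. (Here, by \emph{switching at a vertex} $v$ I mean replacing $A_{\dot G}$ by $D A_{\dot G} D$, where $D$ is the diagonal $\{0,\pm1\}$-matrix with a single $-1$ in the position indexed by $v$; this negates exactly the edges incident with $v$ and is a switching isomorphism.) Fix a vertex $v_0$ of the given SR2SE and let $v_1,\dots,v_r$ be its neighbours, which is the right number by Lemma~\ref{lem:reg}. Since $\dot G$ is triangle-free, $v_1,\dots,v_r$ are pairwise non-adjacent, so each pair $v_i,v_j$ (say $i<j$) has $v_0$ together with exactly one further common neighbour; denote it $u_{ij}$. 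The first step is to nail down the combinatorial skeleton: if some $u_{ij}$ were adjacent to a third vertex among $v_1,\dots,v_r$, or if $u_{ij}=u_{ik}$ for $j\neq k$, then that vertex would have three common neighbours with $v_0$, contradicting the $(0,2)$-property; hence the $u_{ij}$ are pairwise distinct and distinct from $v_0,\dots,v_r$, $N(v_0)=\{v_1,\dots,v_r\}$, and $N(v_i)=\{v_0\}\cup\{u_{ij}:j\neq i\}$. These $r$ neighbours already exhaust the degree of $v_i$, so $v_0,\dots,v_r$ have no neighbours outside the set of $1+r+\binom r2=\binom{r+1}{2}+1$ vertices so far named; labelling the remaining $k=n-\binom{r+1}{2}-1$ vertices in any order then forces the last $k$ columns of the first $r+1$ rows of the adjacency matrix to be zero, matching the all-zero block of \eqref{schem}.

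Next I would fix the signs in two rounds. Switching at each $v_i$ for which the edge $v_0v_i$ is negative makes the star $K_{1,r}$ on $v_0,\dots,v_r$ all-positive, which is the top-left block of \eqref{schem}. It remains to arrange, for every $i<j$, that $\sigma(v_iu_{ij})=+1$ and $\sigma(v_ju_{ij})=-1$. This is where the spectral hypothesis is used: as recalled in Section~\ref{sec:prel}, $A_{\dot G}^2=rI$, so the $(v_i,v_j)$-entry of $A_{\dot G}^2$ vanishes for $i\neq j$; since the common neighbours of $v_i$ and $v_j$ are exactly $v_0$ and $u_{ij}$, that entry equals $\sigma(v_0v_i)\sigma(v_0v_j)+\sigma(v_iu_{ij})\sigma(v_ju_{ij})=1+\sigma(v_iu_{ij})\sigma(v_ju_{ij})$, whence $\sigma(v_iu_{ij})\sigma(v_ju_{ij})=-1$. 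Thus the two edges at $u_{ij}$ that meet $\{v_0,\dots,v_r\}$ have opposite signs, and switching at $u_{ij}$ when necessary makes the edge to $v_i$ positive and the edge to $v_j$ negative. Carrying this out simultaneously over all pairs $i<j$ yields precisely the block structure of \eqref{schem}: for $1\leqslant i\leqslant r-1$, block $i$ has columns indexed by $u_{i,i+1},\dots,u_{i,r}$, its row $v_i$ is all-one, and its rows $v_{i+1},\dots,v_r$ form $-I_{r-i}$.

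The only real delicacy I anticipate is bookkeeping rather than mathematics: one must check the three families of switchings do not interfere. Making the star positive cannot be undone by a later switch at some $u_{ij}$, because $u_{ij}\notin N(v_0)$; switching at $u_{ij}$ affects only the column indexed by $u_{ij}$ among the first $r+1$ rows, because $u_{ij}$'s only neighbours in $\{v_0,\dots,v_r\}$ are $v_i$ and $v_j$; and distinct $u_{ij}$ occupy distinct columns since they are distinct vertices. One also has to confirm that the chosen labelling of $v_1,\dots,v_r$ lists the blocks in the triangular order displayed in \eqref{schem}, which it does by construction. Everything else is a direct transcription of the triangle-free and $(0,2)$-conditions into the stated pattern, and the lemma follows. \hfill$\qed$
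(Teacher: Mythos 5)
Your proof is correct and follows essentially the same route as the paper's: label a vertex, its $r$ neighbours, and the $\binom{r}{2}$ second neighbours $u_{ij}$, then switch at appropriate vertices to realise the sign pattern of \eqref{schem}. You are in fact more explicit than the published argument on the one point it leaves implicit, namely why the target signature is switching-achievable (the negativity of each quadrangle $v_0v_iu_{ij}v_j$, which you correctly derive from $A_{\dot G}^2=rI$).
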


\begin{proof} We apply an appropriate labelling of the vertices of $\dot{G}$ and a sequence of switching isomorphisms.  We first label the vertices of $\dot{G}$ is such a way that 1 corresponds to an arbitrary vertex, and the vertices $2, 3, \ldots, r+1$ are its neighbours. Since $\dot{G}$ is triangle-free the vertices $2, 3, \ldots, r+1$ are mutually non-adjacent. Since $\dot{G}$ is a signed $(0, 2)$-graph, each of vertices $3, 4, \ldots, r+1$ share exactly 2 common neighbours with 2: 1 and another one. We enumerate the additional neighbours by $r+2$, $r+3, \ldots, 2r$, and proceed by enumerating the common neighbours of 3 and each of $4, 5, \ldots, r+1$ by $2r+1, 2r+2, \ldots, 3r-2$, and so on. After completing the enumeration of all vertices that have exactly two common neighbours with the vertex 1, we continue by making a switch at appropriate vertices such that in the resulting signed graph all edges incident with 1 and 2 are positive and, for $3\leqslant i\leqslant r+1$  an edge $ij$ is positive if and only if  there is no edge $kj$ for $k<i$. In this way we arrive at the desired signed graph.
\end{proof}

We continue with a similar result concerning weighing matrices.

\begin{lemma}
\label{lem:schemWeigh}
	Every weighing matrix $W$ of weight $r$ and intersection numbers $0$ and $2$ is equivalent to a weighing matrix whose first $r$ rows are given by~\eqref{schemb}.
\end{lemma}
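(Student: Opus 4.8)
The plan is to mimic the proof of Lemma~\ref{lem:schem}, replacing switching isomorphisms (conjugations by a signed permutation matrix) by the more flexible equivalence operations $W\mapsto PWQ$, and replacing the combinatorial facts about common neighbours by the orthogonality relations and the intersection-number hypothesis on the rows of $W$. Since $W$ is square with $W^\transpose W=rI$, it is invertible, whence $WW^\transpose=rI$ as well; thus every row and every column of $W$ has exactly $r$ nonzero entries, and any two distinct rows (resp.\ columns) are orthogonal. First I would perform the standard normalisations: permute and negate columns so that row~$1$ becomes $(+,\dots,+,0,\dots,0)$ with its $r$ nonzero entries in columns $1,\dots,r$; then, using that column~$1$ has weight $r$, permute rows so that the rows meeting column~$1$ are exactly rows $1,\dots,r$, and negate rows $2,\dots,r$ so that the whole first column consists of $+$'s. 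After this, any two of the rows $1,\dots,r$ share column~$1$, hence by hypothesis share exactly one further column.

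For $1\leqslant i<j\leqslant r$, let $f(i,j)$ denote this unique second common column; since $W_{i1}=W_{j1}=+1$ and rows $i,j$ are orthogonal, $W_{i,f(i,j)}\,W_{j,f(i,j)}=-1$. The crucial claim is that the columns $f(i,j)$ are pairwise distinct. Indeed, if $f(i,j)=f(i',j')=c$ for two distinct pairs $\{i,j\}\neq\{i',j'\}$, then---because these pairs overlap in at most one index, and all of rows $1,\dots,r$ carry a $+$ in column~$1$---one locates two rows among $i,j,i',j'$ that are nonzero exactly in columns $1$ and $c$ and have identical entries in both, so their inner product equals $2$, contradicting orthogonality. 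This is the one genuinely nonroutine point of the argument. Granting the claim, each $f(1,j)$ lies in $\{2,\dots,r\}$, the support of row~$1$, and these are $r-1$ distinct columns, so $\{f(1,2),\dots,f(1,r)\}=\{2,\dots,r\}$; permuting columns $2,\dots,r$ we may assume $f(1,j)=j$, and then $W_{jj}=-1$ while $W_{jm}=0$ for $m\in\{2,\dots,r\}\setminus\{j\}$. This exhibits the first $r\times r$ block of~\eqref{schemb} (and no further negation of columns $1,\dots,r$ is needed, so row~$1$ and column~$1$ remain normalised).

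It remains to arrange the columns $f(i,j)$ with $2\leqslant i<j\leqslant r$. Each such column is $\geqslant r+1$ (it cannot lie in the support $\{1,\dots,r\}$ of row~$1$), there are $\binom{r-1}{2}$ of them, all distinct, and each carries---among rows $1,\dots,r$---exactly the two nonzero entries in rows $i$ and $j$ (once more by distinctness of the $f$'s). As rows $2,\dots,r$ have in total $(r-1)(r-2)=2\binom{r-1}{2}$ nonzero entries outside the first $r$ columns, these columns account for all of them, so the remaining $k:=n-\binom{r}{2}-1$ columns are identically zero on rows $1,\dots,r$ and will supply the trailing zero block of~\eqref{schemb}. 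I would then choose a permutation of columns $\geqslant r+1$ that sends $f(i,j)$ to the $(j-i)$-th column of the $i$-th block of~\eqref{schemb} (for $i=2,\dots,r-1$) and the remaining $k$ columns to the tail, and finally negate columns so that $W_{i,f(i,j)}=+1$ for all such $i,j$; since each column $f(i,j)$ is nonzero in rows $i$ and $j$ only, among the first $r$ rows, these negations are mutually consistent and do not disturb the earlier normalisations. Orthogonality of rows $i$ and $j$ then gives $W_{j,f(i,j)}=-1$, and orthogonality of rows $\ell$ and $i$ for $\ell<i$, together with the entries already fixed, pins down all the remaining $-I$ patterns; hence the first $r$ rows of the resulting matrix are exactly~\eqref{schemb}. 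The single obstacle the whole proof hinges on is the pairwise distinctness of the $f(i,j)$.
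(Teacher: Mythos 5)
Your proof is correct and takes essentially the same route as the paper's, which merely observes that orthogonality forces each intersecting pair of rows to agree in sign at one common position and disagree at the other, and then asserts that the first $r$ rows may be brought to the form \eqref{schemb}; you supply the details (notably the pairwise distinctness of the second common columns $f(i,j)$, which is indeed the crux) that the paper leaves implicit. One wording slip: ``two rows \dots\ that are nonzero exactly in columns $1$ and $c$'' should read ``two rows whose \emph{common} nonzero positions are exactly columns $1$ and $c$'' (each row has $r$ nonzero entries), but the intended inner-product argument is sound.
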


\begin{proof} Since $W^2=rI$, we have that every pair of rows that intersect at 2 places, in one place intersect at elements of the same sign and in the other at elements that differ in sign. In addition, we may assume that the first $r$ rows are as in \eqref{schemb}, and the result follows.
\end{proof}

An $n$-vertex, $r$-regular graph $G$ is called \emph{strongly regular} with parameters $(n,r,a,c)$ if every pair of adjacent vertices has $a$ common neighbours and every pair of non-adjacent vertices has $c$ common neighbours.
Now we list some basic properties of SR2SEs.

\begin{theorem}\label{the:1-4}
	If $\dot{G}$ is an $(n,r)$-SR2SE, then:
	\begin{itemize}
		\item[(i)] $n\geqslant {r+1\choose 2}+1$. In case of equality, the underlying graph $G$ is strongly regular with parameters $(n, r, 0, 2)$;
		
		\item[(ii)] $\dot{G}$ contains exactly $\frac{n}{4}{r\choose 2}$ quadrangles;
		
		\item[(iii)]  If $n\equiv 2~(\md 4)$, then (a) $r$ is the sum of two squares and (b) $r(r-1)\equiv 0~(\md 4)$;
		
		\item[(iv)] The sum of the cubes (resp.~fourth powers) of the eigenvalues of $G$ is equal to zero (resp.~$nr(3r-2)$).
	\end{itemize}
\end{theorem}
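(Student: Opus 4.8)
The plan is to handle the four parts separately, using throughout the two basic facts attached to an SR2SE: writing $A=A_{\dot G}$, the spectral hypothesis forces $A^2=rI_n$ (so $A$ is a symmetric $(n,r)$-weighing matrix), $\dot G$ (hence its underlying graph $G$) is triangle-free, and the matrix $B:=A_G^2$ has all diagonal entries equal to $r$ and every off-diagonal entry in $\{0,2\}$.

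\emph{Part (i).} Fix a vertex $v$ with neighbours $u_1,\dots,u_r$. Triangle-freeness makes the $u_i$ pairwise non-adjacent, and since $v$ is a common neighbour of $u_i$ and $u_j$ the $(0,2)$-property supplies a unique \emph{second} common neighbour $w_{ij}$ of $u_i$ and $u_j$ for each $i<j$. I would then check that the $w_{ij}$ are pairwise distinct and distinct from $v$ and from every $u_k$: a coincidence $w_{ij}=u_k$ produces a triangle on $\{v,u_i,u_k\}$, while any coincidence among the $w_{ij}$ makes the resulting vertex adjacent to at least three of the $u_k$, so that this vertex and $v$ would be non-adjacent with at least three common neighbours, contradicting the $(0,2)$-property. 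Hence $n\geqslant 1+r+\binom r2=\binom{r+1}{2}+1$. In case of equality these $\binom{r+1}{2}+1$ vertices exhaust $V(\dot G)$, so every vertex lies within distance $2$ of $v$; as $v$ was arbitrary, $G$ has diameter $2$, and being triangle-free and $(0,2)$ it is then strongly regular with parameters $(n,r,0,2)$.

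\emph{Parts (ii) and (iv).} For (ii) I would double count the incidences $(x,\{y,z\})$ in which $x$ is adjacent to both $y$ and $z$: there are $n\binom r2$ of them, each determines the quadrangle on $y,x,z$ together with the second common neighbour of $y$ and $z$, and each quadrangle arises from exactly four such incidences (one per vertex), giving $\tfrac n4\binom r2$ quadrangles. For (iv), $\sum_i\mu_i^3=\tr(A_G^3)=6\cdot(\text{number of triangles})=0$ since $G$ is triangle-free, while $\sum_i\mu_i^4=\tr(A_G^4)=\sum_{i,j}B_{ij}^2=nr^2+8t$, where $t=\tfrac n2\binom r2$ is the number of non-adjacent vertex pairs with two common neighbours (again read off from the incidence count); simplifying yields $nr(3r-2)$.

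\emph{Part (iii).} Statement (b) is immediate from (ii): $\tfrac n4\binom r2$ is an integer, so $8\mid nr(r-1)$, and writing $n=2n'$ with $n'$ odd (using $n\equiv 2\pmod 4$) forces $4\mid r(r-1)$. For (a), the point is that $A\in\mathrm{GL}_n(\mathbb Q)$ with $A^\transpose A=rI_n$, so the quadratic forms $I_n$ and $rI_n$ are equivalent over $\mathbb Q$; comparing Hasse invariants at each place gives $(r,r)_p^{\binom n2}=1$ for all $p$, and when $n\equiv 2\pmod 4$ the exponent $\binom n2$ is odd, so $(r,r)_p=(-1,r)_p=1$ for every $p$ (the archimedean place included, since $r>0$). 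By Hasse--Minkowski the conic $rz^2=x^2+y^2$ then has a nontrivial rational point, so $r$ is a sum of two rational squares and hence a sum of two integer squares; alternatively one simply invokes the classical fact that the weight of an $(n,r)$-weighing matrix with $n\equiv 2\pmod 4$ is a sum of two squares. I expect part (a) to be the only real obstacle: it alone requires input beyond elementary combinatorics and trace identities, namely the local--global theory of quadratic forms, whereas (i), (ii), (iv) and (iii)(b) are short and self-contained.
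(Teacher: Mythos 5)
Your proof is correct and follows essentially the same route as the paper's: the bound in (i) and the quadrangle count in (ii) rest on the same count of a vertex, its $r$ neighbours, and the $\binom{r}{2}$ pairwise distinct second common neighbours (which the paper packages as the normal form of Lemma~\ref{lem:schem}), (iii)(b) and (iv) are the same divisibility observation and trace computations, and for (iii)(a) the paper simply cites the classical sum-of-two-squares condition for weighing matrices from \cite{Seb}, which is exactly the fact your Hasse--Minkowski argument reproves. The only substantive difference is that you supply a self-contained local--global proof of that last fact where the paper relies on the literature.
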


\begin{proof} (i): By Lemma~\ref{lem:schem}, we may assume that the first $r+1$ rows of $A_{\dot{G}}$ are as in~\eqref{schem}.
Thus $n\geqslant r+1+ {r\choose 2}+k$, that is,  $n\geqslant {r+1\choose 2}+1$. 
In case of equality, we have $k=0$ which means that every pair of non-adjacent vertices of $G$ has 2 common neighbours. 
Since $G$ is triangle-free, we find that $G$ is strongly regular with parameters $(n,r,0,2)$.
	
	(ii): Observe that, according to \eqref{schem}, the first vertex has ${r\choose 2}$ non-neighbours such that it shares 2 common neighbours with each of them. This gives ${r\choose 2}$ quadrangles containing the first vertex, i.e., 	$\frac{n}{4}{r\choose 2}$ quadrangles in total.
	
	(iii):
    For $n\equiv 2~(\md 4)$, (a) is a necessary condition for the existence of the adjacency matrix of $\dot{G}$, which can be found in \cite[p. 300]{Seb}, while (b) follows from the item (ii).
	
	(iv): Since $G$ is triangle-free, the trace of $A_G^3$ is zero, which gives the sum of the third degrees of its eigenvalues. Further, every diagonal entry of $A_G^4$ is equal to the number of closed walks of length 4 that start and terminate at the corresponding vertex. 
	This number is $r^2+r(r-1)+2{r\choose 2}$.
	Indeed, if the corresponding vertex is $u$ then the first term counts $4$-walks of the form $(u,v,u,w,u)$ where $v$ and $w$ can be equal, the second counts those of the form $(u,v,w,v,u)$ where $u$ and $w$ are distinct, and the third counts those of the form $(u,v,w,x,u)$ where all four vertices are distinct. By multiplying by $n$ we obtain the result.
\end{proof}

We can say more in the case of bipartite SR2SEs. We note in passing that, according to \cite{Bro}, they may be seen as signed semibiplanes with eigenvalues $\pm \lambda$.
First we show a relation between $(2n, r)$-SR2SEs and $(n,r)$-weighing matrices.

\begin{theorem}\label{the:bip1}
 Every proper { equivalence class of} $(n, r)$-weighing matrices with intersection numbers $0$ and $2$ uniquely determines a switching isomorphism class of bipartite $(2n, r)$-SR2SEs, and vice versa.
\end{theorem}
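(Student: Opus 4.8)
The plan is to exploit the standard bipartite-double / biadjacency correspondence. If $\dot G$ is a bipartite signed graph on $2n$ vertices with parts $X$ and $Y$ of equal size $n$ (the parts must have equal size because $\dot G$ is regular and bipartite), then after ordering the vertices so that $X$ comes first, the adjacency matrix has the block form
\begin{equation*}
A_{\dot G} = \begin{bmatrix} O & N \\ N^\transpose & O \end{bmatrix},
\end{equation*}
where $N$ is the $n \times n$ signed biadjacency matrix. Since the spectrum of $\dot G$ is $\{[-\lambda]^n,[\lambda]^n\}$, we have $A_{\dot G}^2 = \lambda^2 I_{2n}$, which forces $N N^\transpose = N^\transpose N = \lambda^2 I_n = r I_n$; so $N$ is an $(n,r)$-weighing matrix. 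Conversely, any $(n,r)$-weighing matrix $N$ produces via this block construction a signed graph on $2n$ vertices, bipartite by construction, whose square is $r I_{2n}$, hence with spectrum $\{[-\sqrt r]^n,[\sqrt r]^n\}$ (it has both signs since the matrix has zero trace).

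Next I would pin down that the combinatorial conditions match up. The off-diagonal entry of $N N^\transpose$ in position $(i,j)$ is the signed inner product of rows $i$ and $j$ of $N$; since $NN^\transpose = rI$ this is always $0$, but the number of \emph{positions} in which rows $i,j$ are both nonzero is exactly the number of common neighbours in $Y$ of the vertices $x_i, x_j \in X$. So the intersection numbers of $N$ being among $\{0,2\}$ is equivalent to every two vertices in $X$ having $0$ or $2$ common neighbours; applying the same argument to $N^\transpose$ handles pairs in $Y$; and pairs with one vertex in each part have no common neighbours at all since $\dot G$ is bipartite. Thus ``$N$ has intersection numbers $0$ and $2$'' is precisely the condition that $\dot G$ is a signed $(0,2)$-graph, and bipartiteness already makes it triangle-free, so $\dot G$ is a signed rectagraph exactly when $N$ is a weighing matrix with intersection numbers $0$ and $2$. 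Connectedness of $\dot G$ corresponds exactly to $N$ being proper: a disconnected bipartite signed graph with this block structure has a biadjacency matrix that, after permuting rows and columns (i.e.\ up to equivalence), is a direct sum, and conversely; so the \emph{proper} equivalence classes correspond to \emph{connected} signed rectagraphs, which is what the $(2n,r)$-SR2SE notation requires.

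Finally I would check that the correspondence descends correctly to equivalence/switching-isomorphism classes, which is the step most likely to need care. On one side, switching at a vertex of $\dot G$ in $X$ negates a row of $N$ and permuting vertices within $X$ permutes rows of $N$; likewise for $Y$ and columns. So two block-form adjacency matrices are switching isomorphic via a signed permutation matrix preserving the bipartition if and only if their biadjacency matrices are equivalent. The remaining subtlety is that a switching isomorphism of $\dot G$ need \emph{not} preserve the bipartition $(X,Y)$: it could swap the two parts. Swapping $X$ and $Y$ replaces $N$ by $N^\transpose$, but $N$ and $N^\transpose$ are always equivalent as weighing matrices (indeed $N^\transpose = I N^\transpose I$ and more to the point $N^\transpose$ lies in the same equivalence class — one can see this from $N^\transpose = (N^{-1})^\transpose \cdot (N N^\transpose) / r \cdot$ etc., or simply note transposition is built into the equivalence relation via $M = PNQ$ applied suitably), so this ambiguity does not enlarge the equivalence class. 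Also, for a connected bipartite graph the bipartition is unique up to this swap, so no further identifications occur. Assembling these observations gives a well-defined bijection between proper equivalence classes of $(n,r)$-weighing matrices with intersection numbers $0,2$ and switching isomorphism classes of bipartite $(2n,r)$-SR2SEs, which is the claim. I expect the handling of the part-swapping automorphism and the precise matching of ``proper'' with ``connected'' to be the only genuinely delicate points; everything else is the routine block-matrix bookkeeping sketched above.
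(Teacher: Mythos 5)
Your construction is the same one the paper uses: identify a bipartite $(2n,r)$-SR2SE with its signed biadjacency matrix $N$, so that $A_{\dot G}^2=rI$ becomes $NN^\transpose=N^\transpose N=rI$, translate intersection numbers into common-neighbour counts, and match \emph{proper} with \emph{connected}. You are in fact more explicit than the paper, which only writes out the direction from matrices to graphs (equivalent matrices $M=PNQ$ yield switching isomorphic graphs via conjugation by the block matrix with diagonal blocks $Q$ and $P^\transpose$) and dismisses the converse with ``can be shown in a similar manner.''

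The step that does not hold up is your claim that $N$ and $N^\transpose$ are always equivalent as weighing matrices. None of the justifications you offer is valid: $N^\transpose = I N^\transpose I$ only witnesses that $N^\transpose$ is equivalent to itself; transposition is \emph{not} built into the relation $M=PNQ$; and $NN^\transpose=rI$ gives $N^\transpose=rN^{-1}$, which is not visibly of the form $PNQ$. This is the weighing-matrix analogue of asking whether a Hadamard matrix is equivalent to its transpose, which is a nontrivial question with a negative answer in general. So the case of a switching isomorphism that exchanges the two colour classes --- which you correctly identified as the delicate point for the ``vice versa'' direction --- is not actually handled; it needs either a genuine argument that $N$ and $N^\transpose$ are equivalent for these particular weighing matrices, or a weaker reading of the theorem. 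A smaller instance of the same slippage: the hypothesis gives intersection numbers $0$ and $2$ for the \emph{rows} of $N$, and you need them for the columns as well; this does follow, but by a four-cycle counting argument, not by ``applying the same argument to $N^\transpose$,'' since the hypothesis is not stated for $N^\transpose$. To be fair, the paper's own proof is silent on both points, so you have located a genuine subtlety rather than introduced one; you just have not resolved it.
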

\begin{proof}
     If $M, N$ are proper equivalent $(n, r)$-weighing matrices, then there exist signed permutation matrices $P, Q$, such that $M=PNQ$. 
     We set
	\begin{equation*}\label{A_G}A_{\dot{G}}=\begin{bmatrix}
	O&M^\transpose \\ M& O
	\end{bmatrix}~~\text{and}~~A_{\dot{H}}=\begin{bmatrix}
	O&N^\transpose \\ N& O
	\end{bmatrix}.\end{equation*}
	Evidently, $\dot{G}, \dot{H}$ are bipartite $(0, 2)$-signed graphs with eigenvalues $\pm\sqrt{r}$ (the latter is because $A^2_{\dot{G}}=A^2_{\dot{H}}=rI$, which follows easily). Further,
	$$A_{\dot{G}}=\begin{bmatrix}
	Q&O\\O&P^\transpose
	\end{bmatrix}^{-1}\begin{bmatrix}
	O&N^\transpose\\N&O
	\end{bmatrix}\begin{bmatrix}
	Q&O\\O&P^\transpose
	\end{bmatrix}$$
	implies that $\dot{G}$ and $\dot{H}$ are switching isomorphic. Finally, if the matrix equivalence class does not contain a matrix of the form
	$$\begin{bmatrix}
	M_1&O\\O&M_2
	\end{bmatrix},$$
	the corresponding signed graphs are connected, and we are done. 
	
The converse can be shown in a similar manner.
\end{proof}

A \textit{symmetric balanced incomplete block design} (a \textit{symmetric BIBD}) with parameters $(n, r, l)$ is an arrangement of $n$ points into the $n$ blocks of size $r$ in such a way that every point is contained in $r$ blocks and every pair of points occurs together in $l$ blocks. The vertices of the \textit{incidence graph} of a symmetric BIBD are indexed by the points and the blocks, and two vertices are adjacent if and only if one is indexed by a point and other by a block containing that point. This graph is bipartite.

\begin{theorem}\label{the:bip2} Let $\dot{G}$ be a bipartite $(2n,r)$-SR2SE.
	Then
	\begin{itemize}
%
		
		\item[(i)] $n\geqslant {r\choose 2}+1$. In case of equality, the underlying graph $G$ is the incidence graph of a symmetric BIBD with parameters $(n, r, 2)$;
		
		\item[(ii)] If $n$ is odd, then (a) $r$ is a square, (b) $n\leqslant (n-r)^2+n-r+1$ and (c) $r(r-1)\equiv 0~(\md 4)$. If $n\equiv 2~(\md 4)$, then  $r$ is the sum of two squares. 
	\end{itemize}
\end{theorem}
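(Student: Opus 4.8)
The plan is to transfer the statement to the associated weighing matrix and then argue partly combinatorially and partly number-theoretically. By Theorem~\ref{the:bip1} we may assume $A_{\dot{G}}=\begin{bmatrix}O&M^\transpose\\M&O\end{bmatrix}$ with $M$ an $(n,r)$-weighing matrix having intersection numbers $0$ and $2$; here the rows and columns of $M$ index the two sides of the bipartition of $G$, which we call blocks and points, and the support matrix $N=|M|$ (recording the positions of the nonzero entries of $M$) is the block--point incidence matrix of $G$, with every row sum and column sum equal to $r$. For item~(i), Lemma~\ref{lem:schemWeigh} lets us take the first $r$ rows of $M$ to be those of~\eqref{schemb}; that array has $\binom{r}{2}+1+k$ columns, and since it coincides with the first $r$ rows of the $n\times n$ matrix $M$ we must have $k=n-\binom{r}{2}-1\geqslant 0$, so $n\geqslant\binom{r}{2}+1$, with $k=0$ exactly when equality holds. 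If $n=\binom{r}{2}+1$ a double count completes the argument: any point $p$ lies in $r$ blocks, which together are incident with the remaining $n-1$ points in $r(r-1)$ pairs; since each of those points lies in $0$ or $2$ of these blocks, exactly $\binom{r}{2}=n-1$ of them lie in two of the blocks and none in zero, whence every two points have exactly two common blocks, so $N$ is the incidence matrix of a symmetric BIBD with parameters $(n,r,2)$ and $G$ is its incidence graph.

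For the arithmetic parts of~(ii) I combine $MM^\transpose=rI_n$ with Theorem~\ref{the:1-4}. Since $M$ is a square $\{0,\pm 1\}$-matrix, $(\det M)^2=r^n$ with $\det M\in\mathbb{Z}$, so $r^n$ is a perfect square; for $n$ odd this is~(a). For~(c), apply Theorem~\ref{the:1-4}(ii) to $\dot{G}$, whose order is $2n$: its number of quadrangles equals $\tfrac{2n}{4}\binom{r}{2}=\tfrac{n\,r(r-1)}{4}$, a nonnegative integer, so $4\mid n\,r(r-1)$, and hence $4\mid r(r-1)$ when $n$ is odd. For~(b), combine~(a) and~(i): a perfect square $r$ lies in $\{1\}\cup\{4,9,16,\dots\}$, and then (i) gives $n-r\geqslant\binom{r}{2}+1-r=\binom{r-1}{2}\geqslant\sqrt{r-1}$ (trivially for $r=1$, and because $(r-1)(r-2)^2\geqslant 4$ for $r\geqslant 4$), so $(n-r)^2\geqslant r-1$, which rearranges to $n\leqslant(n-r)^2+(n-r)+1$. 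Finally, $M$ is an $(n,r)$-weighing matrix, and when $n\equiv 2\pmod{4}$ the rational-congruence (Hasse--Minkowski) obstruction for weighing matrices of order congruent to $2$ modulo $4$ — the same one recorded in~\cite[p.~300]{Seb} that underlies Theorem~\ref{the:1-4}(iii)(a) — forces $r$ to be a sum of two squares; this must be applied to $M$ itself, since the underlying graph of $\dot{G}$ has order $2n\equiv 0\pmod{4}$ and so Theorem~\ref{the:1-4}(iii)(a) does not apply to it.

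I expect the delicate points to be the equality analysis in~(i) — making the double count rigorous and correctly identifying the design and its parameters — and, if one wants a proof of~(b) not routed through~(a), a de Bruijn--Erd\H{o}s- or Fisher-type lower bound on the number of blocks that avoid a fixed point; the other items are routine once the reduction to the weighing matrix $M$ is in hand.
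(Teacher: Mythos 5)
Your proposal is correct and follows the paper's route: reduce to the $(n,r)$-weighing matrix via Theorem~\ref{the:bip1}, count columns in the normal form of Lemma~\ref{lem:schemWeigh} for the bound in (i), use the quadrangle count of Theorem~\ref{the:1-4}(ii) for (c), and invoke the classical necessary conditions for weighing matrices for the remaining arithmetic claims. The only (mild) divergences are that you make two of the cited facts self-contained --- (a) via $(\det M)^2=r^n$, and (b) as a consequence of (a) together with the bound $n\geqslant\binom{r}{2}+1$, whereas the paper simply quotes both from \cite{Seb} as independent conditions on $B$ --- and that your double count in the equality case of (i) spells out explicitly what the paper asserts tersely from $k=0$. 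These checks all go through (in particular $(n-r)^2\geqslant\bigl(\tbinom{r-1}{2}\bigr)^2\geqslant r-1$ holds for every perfect square $r$), so nothing essential is changed.
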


\begin{proof}
By Theorem~\ref{the:bip1}, we can write 
	\begin{equation*}A_{\dot{G}}=\begin{bmatrix}
	O&B^\transpose \\ B& O
	\end{bmatrix},
	\end{equation*}
	where $B$ is a proper weighing matrix.
	
	(i): Using Lemma~\ref{lem:schemWeigh}, we may assume that the first $r$ rows of $B$ are as in~\eqref{schemb}.
Thus $n\geqslant r+ {r - 1\choose 2}+k$, that is,  $n\geqslant {r\choose 2}+1$. 
In case of equality, i.e., $k=0$, we have that every two vertices of the first colour class have 2 common neighbours in the second, and vice versa. Taking into account $n$ and $r$, we get that $G$ is the incidence graph of a symmetric BIBD with parameters $(n, r, 2)$, as required.
	
	(ii): 
    The claims (a), (b) are necessary conditions from \cite[p. 300]{Seb} for the existence of the proper weighing matrix $B$. Further,
by Theorem~\ref{the:1-4}(ii) $\dot{G}$ has $\frac{n}{2}{n\choose 4}$ quadrangles.
Thus, we arrive at (c). 
For $n\equiv 2~(\md 4)$, the result again follows from
the necessary condition from \cite[p. 300]{Seb} for the existence of the proper weighing matrix $B$. 
\end{proof}

At this point we need the following compositions of signed graphs. We recall that the Cartesian product $\dot{G}\times K_2$ of a signed graph  $\dot{G}$ and $K_2$ is obtained by taking two copies of $\dot{G}$ and inserting a positive edge between two copies of every vertex. Here we need a modification of this product denoted by $\dot{G}\ltimes K_2$ and obtained from $\dot{G}\times K_2$ by reversing the sign of every edge in exactly one copy of $\dot{G}$.  

\begin{lemma}\label{lem:biTensor} 
    Let $\dot G$ be a signed graph with $\det \left (xI - A_{\dot G}\right ) = x^{m_0}\prod_{i=1}^l(x^2-\lambda_i)^{m_i}$.
    Then $\det\left (xI-A_{\dot{G}\ltimes K_2}\right ) = (x^2-1)^{m_0}\prod_{i=1}^l(x^2-\lambda_i-1)^{2m_i}$.
\end{lemma}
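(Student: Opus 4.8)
The plan is to compute the characteristic polynomial of $A_{\dot G \ltimes K_2}$ directly from its block structure. Order the vertices so that the two copies of $\dot G$ come first and second; then
\[
A_{\dot{G}\ltimes K_2} = \begin{bmatrix} A_{\dot G} & I \\ I & -A_{\dot G} \end{bmatrix},
\]
since one copy carries $A_{\dot G}$, the other carries $-A_{\dot G}$ (the negation), and the matching edges between the two copies are all positive, contributing the identity block. The characteristic polynomial is $\det\!\left(\begin{bmatrix} xI - A_{\dot G} & -I \\ -I & xI + A_{\dot G}\end{bmatrix}\right)$.

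The key step is to evaluate this $2\times 2$ block determinant. Since the off-diagonal blocks are $\pm I$ and hence commute with everything, I can use the standard formula $\det\!\begin{bmatrix} P & Q \\ R & S\end{bmatrix} = \det(PS - QR)$ when $R$ commutes with $S$ (or invoke the Schur complement away from the finitely many $x$ where $xI+A_{\dot G}$ is singular, then extend by continuity of polynomials). This gives
\[
\det\!\left(xI-A_{\dot{G}\ltimes K_2}\right) = \det\!\big((xI-A_{\dot G})(xI+A_{\dot G}) - I\big) = \det\!\big((x^2-1)I - A_{\dot G}^2\big).
\]
Now diagonalize: if $A_{\dot G}$ has eigenvalue $0$ with multiplicity $m_0$ and eigenvalues $\pm\sqrt{\lambda_i}$ contributing $A_{\dot G}^2$-eigenvalue $\lambda_i$ with total multiplicity $2m_i$, then $A_{\dot G}^2$ has eigenvalue $0$ with multiplicity $m_0$ and eigenvalue $\lambda_i$ with multiplicity $2m_i$. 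Substituting, $\det\!\big((x^2-1)I - A_{\dot G}^2\big) = (x^2-1)^{m_0}\prod_{i=1}^{l}(x^2-1-\lambda_i)^{2m_i}$, which is exactly the claimed formula. Note that the hypothesis $\det(xI-A_{\dot G}) = x^{m_0}\prod_i (x^2-\lambda_i)^{m_i}$ is precisely the statement that the spectrum of $\dot G$ is symmetric with these multiplicities, so the multiplicities of $A_{\dot G}^2$ are read off as stated.

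The only mild obstacle is justifying the block-determinant identity when $xI + A_{\dot G}$ fails to be invertible; this is handled routinely by the polynomial-continuity argument (both sides are polynomials in $x$ agreeing on a cofinite set) or by multiplying out $\begin{bmatrix} xI-A_{\dot G} & -I \\ -I & xI+A_{\dot G}\end{bmatrix}\begin{bmatrix} I & 0 \\ xI+A_{\dot G} & I\end{bmatrix}$ and taking determinants, which requires no invertibility at all. Everything else is a short symbolic computation, so I expect no real difficulty.
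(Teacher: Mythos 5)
Your proof is correct and follows essentially the same route as the paper: write $A_{\dot{G}\ltimes K_2}$ in the stated block form, reduce the characteristic polynomial to $\det\bigl((x^2-1)I - A_{\dot G}^2\bigr)$ via the commuting-block determinant formula, and read off the result from the spectrum of $A_{\dot G}^2$. The extra care you take over singularity of $xI+A_{\dot G}$ is fine via the polynomial-identity argument, though the particular unipotent multiplier you wrote does not actually triangularize the block matrix; a clean algebraic fix is to left-multiply by $\bigl[\begin{smallmatrix} I & P \\ 0 & I\end{smallmatrix}\bigr]$ to get $\det\bigl[\begin{smallmatrix} P & -I \\ -I & S\end{smallmatrix}\bigr] = \det(PS - I)$ with no invertibility assumption.
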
 
\begin{proof}
    The adjacency matrix of $\dot{G}\ltimes K_2$ is
	$$A_{\dot{G}\ltimes K_2}=\begin{bmatrix}
		A_{\dot{G}}& I_n\\ I_n&-A_{\dot{G}}
	\end{bmatrix}.$$
	Since all blocks commute with each other, we have \begin{align*}
	\det\left (xI-A_{\dot{G}\ltimes K_2}\right )&=\det\left((xI_n-A_{\dot{G}})(xI_n+A_{\dot{G}})-I_n\right)=\det\left((x^2-1)I_n-A_{\dot{G}}^2\right).
	\end{align*}
	The lemma follows since  $\det \left ( xI - A^2_{\dot G}\right ) = x^{m_0}\prod_{i=1}^l(x-\lambda_i)^{2m_i}$.
\end{proof}

The \textit{bipartite double} $\bd(\dot{G})$ of a signed graph $\dot{G}$ with vertex set $\{i_1, i_2, \ldots, i_n\}$ has the vertex set $\{i_11, i_12, i_21, i_22, \ldots, i_n1, i_n2\}$ and there is a positive (resp.~negative) edge between $i_uj$ and $i_vk$ if and only if there is a positive (negative) edge between $i_u$ and $i_v$ and $j\neq k$. The adjacency matrix of $\bd(\dot{G})$ is the Kronecker product $A_{\dot{G}}\otimes A_{K_2}$. 
The signed graph $\bd(\dot{G})$ is bipartite and it is connected if and only if $\dot{G}$ is non-bipartite.

\begin{theorem}\label{the:const} If $\dot{G}$ is an $(n, r)$-SR2SE, then $\dot{G}\ltimes K_2$ is a $(2n, r+1)$-SR2SE. If $\dot{G}$ is non-bipartite, then $\bd(\dot{G})$ is a $(2n, r)$-SR2SE; otherwise, $\bd(\dot{G})$ consists of two disjoint copies of $\dot{G}$.
\end{theorem}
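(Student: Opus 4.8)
The statement is about two constructions: $\dot G \ltimes K_2$ and $\bd(\dot G)$. For the first part, I would invoke Lemma~\ref{lem:biTensor} directly. Since $\dot G$ is an $(n,r)$-SR2SE, its characteristic polynomial factors as $(x^2-\lambda^2)^{n/2}$ with $\lambda^2 = r$, so Lemma~\ref{lem:biTensor} gives that $\dot G \ltimes K_2$ has characteristic polynomial $(x^2 - r - 1)^n$. Hence $\dot G \ltimes K_2$ has exactly two eigenvalues $\pm\sqrt{r+1}$, has order $2n$, and is $(r+1)$-regular (each vertex keeps its $r$ neighbours in its own copy and gains one more from the matching). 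For the second part, the adjacency matrix of $\bd(\dot G)$ is $A_{\dot G}\otimes A_{K_2}$, whose square is $A_{\dot G}^2\otimes I_2 = rI_{2n}$; so again $\bd(\dot G)$ has exactly two eigenvalues $\pm\sqrt r$ when $\dot G$ is non-bipartite (and is connected precisely then, as recorded in the text just before the theorem), while if $\dot G$ is bipartite then $A_{K_2}$-tensoring splits it into two copies of $\dot G$. Order and regularity of $\bd(\dot G)$ are immediate: $2n$ vertices and degree $r$.

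The remaining—and genuinely substantive—task is to check that both $\dot G \ltimes K_2$ and $\bd(\dot G)$ are signed rectagraphs, i.e.\ that their underlying graphs are connected, triangle-free, and $(0,2)$: every pair of vertices has $0$ or $2$ common neighbours. Connectedness is handled above. Triangle-freeness of $\dot G \ltimes K_2$ follows since its underlying graph is $G\times K_2$ and the Cartesian product of a triangle-free graph with $K_2$ is triangle-free (any triangle would have to lie inside one copy of $G$, as a matching edge together with two copies of $G$-edges cannot close up). Triangle-freeness of $\bd(\dot G)$ is automatic because the bipartite double of any graph is bipartite.

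For the $(0,2)$ condition I would argue combinatorially on the underlying graphs. In $G\times K_2$, a vertex $(u,1)$ has neighbours $(v,1)$ for $v\sim u$ and $(u,2)$. Given two vertices $(u,i)$ and $(w,j)$, I would split into cases on whether $i=j$: if $i=j$ and $u\neq w$, their common neighbours are $\{(v,i): v\in N_G(u)\cap N_G(w)\}$, which has size $0$ or $2$ since $G$ is $(0,2)$; if $i\neq j$, a common neighbour is either $(u,j)$ paired with the requirement $u\sim w$, or of the form $(v,i)$ with $v=w$ and $v\sim u$—so the common neighbours are a subset of $\{(u,j),(w,i)\}$, and one checks both lie there exactly when $u\sim w$, giving $0$ or $2$ again. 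For $\bd(\dot G)$, whose underlying graph is $G\times K_2$ as well (the bipartite double of $G$ is exactly $G\times K_2$ when $G$ is triangle-free, and more generally the tensor product $G\times K_2$), the same case analysis applies; alternatively one notes $\bd(G)$ is the canonical double cover and common-neighbour counts in the double cover of a triangle-free graph equal those in $G$. I expect the case $i\neq j$ in the $(0,2)$ verification to be the only place demanding care, since it is the one where the structure of the product (rather than a single copy of $G$) actually intervenes; everything else is bookkeeping on orders, degrees, and the two already-established spectral computations.
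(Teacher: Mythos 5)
Your proposal follows essentially the same route as the paper's proof: Lemma~\ref{lem:biTensor} handles the spectrum of $\dot G\ltimes K_2$, and the Kronecker identity $A_{\bd(\dot G)}=A_{\dot G}\otimes A_{K_2}$ handles the bipartite double, with connectivity of $\bd(\dot G)$ tied to non-bipartiteness of $\dot G$ exactly as in the text. The only real difference is that the paper dismisses the rectagraph property of the two underlying graphs as following ``from definition,'' whereas you verify it; your case analysis for the Cartesian product $G\times K_2$ is correct and is a worthwhile addition. One slip to fix: the parenthetical claim that the bipartite double of a triangle-free graph $G$ ``is exactly $G\times K_2$'' (the Cartesian product) is false --- $\bd(G)$ is the tensor product $G\otimes K_2$, and for instance $\bd(C_5)=C_{10}$ while the Cartesian product $C_5\times K_2$ is the pentagonal prism --- so the Cartesian-product case analysis does not transfer to $\bd(\dot G)$. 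Fortunately the alternative you immediately supply is the correct one: in the canonical double cover, two vertices in the same part have exactly $|N_G(u)\cap N_G(w)|$ common neighbours and two vertices in different parts have none, which gives the $(0,2)$ property (and triangle-freeness is automatic from bipartiteness). With that parenthetical deleted, the argument is complete.
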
 

\begin{proof}  It follows from definition that $\dot{G}\ltimes K_2$ is an $(r+1)$-regular signed rectagraph, and if $\dot{G}$ is non-bipartite then $\bd(\dot{G})$ is an $r$-regular signed rectagraph, while for otherwise it consists of two disjoint copies of $\dot{G}$. Therefore, it remains to show that both compositions have 2 symmetric eigenvalues. 
Since $\det \left (xI - A_{\dot G}\right ) = (x^2-r)^{n/2}$, by Lemma~\ref{lem:biTensor} the signed graph $\dot{G}\ltimes K_2$ has exactly two eigenvalues $\pm \sqrt{r+1}$.
	
	Since $A_{\bd(\dot{G})}=A_{\dot{G}}\otimes A_{K_2}$, the eigenvalues of $A_{\bd(\dot{G})}$ are the eigenvalues of $A_{\dot{G}}$ multiplied by $\pm 1$, and the result follows.
\end{proof}

We proceed with a useful corollary.

\begin{corollary}\label{cor:uniq} The following statements hold:
	
	\begin{itemize}
		\item[(i)] If $\dot{G}$ is a unique SR2SE with underlying graph $G$, then $\dot{G}\ltimes K_2$ is a unique SR2SE with underlying graph $G\times K_2$.
		
		\item[(ii)] If $G\times K_2$ is the underlying graph of an SR2SE, so is $G$. 
		\end{itemize} 
\end{corollary}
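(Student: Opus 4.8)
\noindent\textit{Proof proposal.}
The plan is to put an arbitrary SR2SE with underlying graph $G\times K_2$ into a normal form that displays it as $\dot{G}'\ltimes K_2$ for some SR2SE $\dot{G}'$ with underlying graph $G$; both parts then follow at once. For existence we need nothing new: Theorem~\ref{the:const} already gives that $\dot{G}\ltimes K_2$ is a $(2n,r+1)$-SR2SE, and by construction its underlying graph is $G\times K_2$, since $\ltimes$ differs from the Cartesian product $\times$ only in signs. Thus in (i) only uniqueness remains, and in (ii) only the claim that $G$ is the underlying graph of an SR2SE.

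For the normal form, let $\dot{H}$ be any SR2SE with underlying graph $G\times K_2$ and set $n=|V(G)|$. Order the vertices so that the first $n$ form one copy of $G$, the last $n$ the other copy, and the $v$-th vertex of each copy are the two ends of a matching edge. Because in $G\times K_2$ the only edges joining the two copies form a perfect matching, $A_{\dot H}$ has block form $\begin{bmatrix} A_1 & D \\ D & A_2 \end{bmatrix}$ with $D$ diagonal with entries in $\{1,-1\}$ and $A_1,A_2$ signings of $G$. Switching at those copy-$2$ vertices where $D$ has a $-1$ replaces $D$ by $I$ and $A_2$ by a symmetrically permuted signing of $G$, and preserves the SR2SE property, so we may assume $D=I$. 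Imposing $A_{\dot H}^2=(r+1)I$ then forces $A_1+A_2=O$ and $A_1^2=rI$, i.e. $A_{\dot H}$ is switching isomorphic to $\begin{bmatrix} A_1 & I \\ I & -A_1 \end{bmatrix}=A_{\dot{G}'\ltimes K_2}$, where $\dot{G}'$ is the signed graph with adjacency matrix $A_1$.

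It remains to check that $\dot{G}'$ really is an SR2SE with underlying graph $G$. Since $\dot{H}$ is a signed rectagraph, $G\times K_2$ is a rectagraph; the first copy of $G$ is a connected, triangle-free induced subgraph, and for $u\neq v$ the common neighbours of $u$ and $v$ in $G$ coincide with the common neighbours of $u1$ and $v1$ in $G\times K_2$, so $G$ is a rectagraph too. Together with $A_1^2=rI$ this makes $\dot{G}'$ an SR2SE on $G$, which proves (ii). For (i), the uniqueness hypothesis yields a signed permutation matrix $Q$ with $Q^{\transpose}A_1Q=A_{\dot G}$; conjugating $A_{\dot H}$ by $\mathrm{diag}(Q,Q)$ (using $Q^{\transpose}Q=I$) turns it into $A_{\dot{G}\ltimes K_2}$, so $\dot{H}$ is switching isomorphic to $\dot{G}\ltimes K_2$, as required.

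There is no deep difficulty here; the only points that need care are the reduction to $D=I$ together with the verification that $A_{\dot H}^2=(r+1)I$ indeed pins down $A_2=-A_1$ and $A_1^2=rI$, and, for (ii), that the purely combinatorial $(0,2)$-condition on $G$ (not just the matrix identity $A_1^2=rI$) is inherited from $G\times K_2$.
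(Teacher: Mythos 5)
Your proposal is correct and follows essentially the same route as the paper: identify the two copies of $G$ inside $G\times K_2$, switch so the matching edges are all positive, observe that the two subconstituents are SR2SEs on $G$, and force one copy to be the negation of the other. Your block-matrix computation ($A_{\dot H}^2=(r+1)I$ yielding $A_1^2=rI$ and $A_1+A_2=O$) is just the matrix-language version of the paper's negative-quadrangle argument, and your explicit check that the combinatorial $(0,2)$-condition descends from $G\times K_2$ to $G$ is a point the paper leaves implicit.
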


\begin{proof}(i): Assume that $\dot{G}$ is a unique SR2SE with underlying graph $G$. By Theorem~\ref{the:const}, $\dot{G}\ltimes K_2$ is an SR2SE, so it remains to show its uniqueness. Assume to the contrary and let $\dot{H}$ be another SR2SE with the same underlying graph. Observe that the subconstituents of $\dot{H}$ whose underlying graphs are $G$ and its copy are SR2SEs, as well. According to the starting assumption, both are switching isomorphic to $\dot{G}$. Without loss of generality, we may take that all the edges between them are positive, since for otherwise we can make an appropriate switching. 
	
	 Let $u, v$ be adjacent vertices in one copy and $u', v'$ their copies in the other. Since these four vertices make a negative quadrangle and the edges $uu'$ and $vv'$ are positive, we get that $uv$ and $u'v'$ differ in sign. In other words, one copy is the negation of the other, so $\dot{H}$ is switching isomorphic to $\dot{G}\ltimes K_2$, and we are done.
	 
	 \smallskip 
	 
	 (ii): As before, if $\dot{H}$ is an SR2SE with underlying graph $G\times K_2$, then the same holds for its subconstituent that arises from one copy of $G$, i.e., whose underlying graph is $G$, and we are done.
\end{proof}

Theorem~\ref{the:const} allows us to construct an infinite family of SR2SEs. 

\begin{corollary}\label{cor:Gi} If 
	$$\left\{\begin{array}{rl}\dot{G}_1 =&\hspace{-2mm} K_2,\\ \dot{G}_{r+1} =&\hspace{-2mm}\dot{G}_r\ltimes K_2,~\text{for}~r\geqslant 1,\end{array}\right.$$
	then $\dot{G}_r$, for $r\geqslant 1$, is a signed rectagraph with eigenvalues $\pm \sqrt{r}$.
 \end{corollary}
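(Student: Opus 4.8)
The plan is to prove the claim by induction on $r$, with essentially all of the work already done in Theorem~\ref{the:const}. For the base case $r=1$, I would observe that $\dot G_1 = K_2$ is connected and triangle-free, and that its only pair of vertices has $0$ common neighbours, so it is a signed rectagraph; its adjacency matrix is $J_2 - I_2$, whose eigenvalues are $\pm 1 = \pm\sqrt{1}$. Hence $\dot G_1$ is a $(2,1)$-SR2SE, matching the asserted spectrum for $r=1$.

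For the inductive step, I would assume that $\dot G_r$ is a signed rectagraph with eigenvalues $\pm\sqrt{r}$, so that $\dot G_r$ is an $(n,r)$-SR2SE for $n = 2^r$ (each application of $\ltimes K_2$ doubles the order, though the value of $n$ is not needed for the statement). Applying Theorem~\ref{the:const} to $\dot G = \dot G_r$ then shows directly that $\dot G_{r+1} = \dot G_r \ltimes K_2$ is a $(2n, r+1)$-SR2SE; in particular it is a signed rectagraph whose spectrum consists of the two eigenvalues $\pm\sqrt{r+1}$. This closes the induction and proves the corollary for all $r \geqslant 1$.

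I do not expect any genuine obstacle here: the substance is packaged entirely in Theorem~\ref{the:const}, and the only point requiring any care is checking the trivial base case $\dot G_1 = K_2$ against the definitions of signed rectagraph and SR2SE. If one wishes, the remark that $\dot G_r$ has order $2^r$ can be carried along in the induction at no extra cost.
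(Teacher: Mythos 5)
Your proof is correct and takes essentially the same approach as the paper, which simply invokes Theorem~\ref{the:const} together with the observation that $K_2$ has eigenvalues $\pm 1$; you have merely spelled out the implicit induction and the base case in more detail.
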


\begin{proof}
	The result follows from Theorem~\ref{the:const} as the eigenvalues of $K_2$ are $\pm 1$.
\end{proof}

The reader will recognize that the signed graph $G_r$ from the previous corollary is the $r$-dimensional signed cube with negative quadrangles. According to \cite{StaFil}, up to switching isomorphism, this is the unique $r$-dimensional signed cube with negative quadrangles.

\section{SR2SEs of small order}\label{sec:small}

Here we determine all SR2SEs whose vertex degree $r$ is at most 6. We also consider some particular cases for $r=7$. Since the underlying graph of every signed rectagraph is an unsigned rectagraph, throughout this section we refer to Brouwer's \cite{Bro} where all $(0,2)$-graphs with $r\leqslant 7$ are listed and those that are rectagraphs are indicated. All of them can be found under the unchanged enumeration in the extended lists of $(0,2)$-graphs with $r\leqslant 8$ located at the web page \cite{BroS}. The results of this section are summarized in Table~\ref{tab:tab}. The first column of the table contains the SR2SE's identification, the second contains its number of vertices, the third  contains the identification of the underlying graph taken from \cite{BroS} and eventual additional description (where, for example $2^r$ denotes the $r$-dimensional cube), the fourth column contains a closer description of the SR2SE and the fifth column refers to the corresponding result.

\subsection{$r\leqslant 5$}\label{ssec:5}

First, the SR2SEs of degree at most 2 are $\dot{G}_1$ and $\dot{G}_2$ of Corollary~\ref{cor:Gi} which is verified easily, and those with $r\in\{3, 4\}$ can be identified among signed graphs of specified degree and exactly 2 eigenvalues. All of them can be found in \cite{Ho...,Sta2}, and they can also be deduced from the results of \cite{McSm}. It occurs that exactly 3 of them are SR2SEs -- $\dot{G}_3$ (of degree 3), the signed graph of Figure~\ref{fig:small}(a) and $\dot{G}_4$. 

We denote the obtained SR2SEs by R1.1, R2.1, R3.1, R4.1 and R4.2, respectively.

Next, SR2SEs with $r=5$ can be deduced from the incomplete list of signed graphs with eigenvalues $\pm\sqrt{5}$ of \cite{Stas5}. Namely, this list contains all signed rectagraphs and among them there are exactly 4 SR2SEs. The three of them are bipartite and they are obtained as in Theorem~\ref{the:bip1} on the basis of weighing matrices of order 12, 14 and 16, respectively. These matrices can be found in \cite{wm,Stas5} (in both references they are denoted by $W_{12, 5}$, $W_{14, 5}$ and $D(16, 5)$). The latter one is the smallest non-bipartite SR2SE illustrated in Figure~\ref{fig:small}(b).
Its underlying graph is the Clebsch graph, also known as the folded $4$-dimensional cube. (We recall that the folded $n$-dimensional cube is obtained by inserting an edge between every pair of vertices at maximal distance in the $n$-dimensional cube.) 

We denote the obtained SR2SEs with $r=5$ by R5.1--R5.4, respectively.

	\begin{figure}
	\centering
	\includegraphics[width=130mm,angle=0]{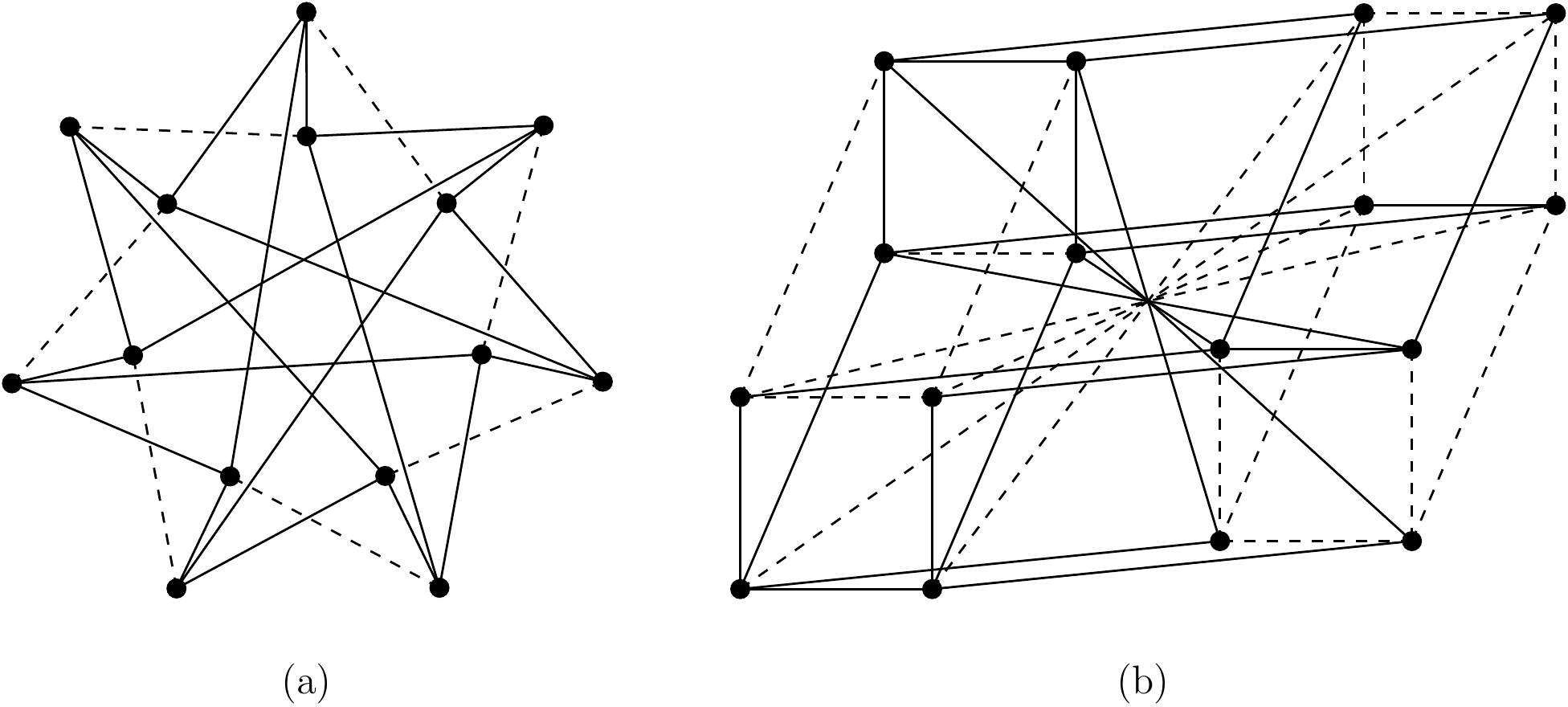}
	\caption{$(14, 4)$-SR2SE R4.1 and $(16, 5)$-SR2SE R5.4.}\label{fig:small}
\end{figure}

Observe that the SR2SE R5.4 of Figure~\ref{fig:small} attains the lower bound of Theorem~\ref{the:1-4}(i). This leads to the following question.

\begin{question}
\label{q1}
Is it true that every strongly regular graph with parameters $(n, r, 0, 2)$ is the underlying graph of an SR2SE?
\end{question}

 Since the underlying graph of R5.4 is also a folded cube, we may ask the following.
 
\begin{question}
\label{q2}
Is it true that every folded $r$-dimensional cube is the underlying graph of an SR2SE?
\end{question}

For Question~\ref{q2} we should restrict ourselves to $r\geqslant 4$, since for $r=3$ the corresponding cube is not a rectagraph. However, we will see in the forthcoming Subsections~\ref{ssec:6} and \ref{ssec:G} that the answer to both Question~\ref{q1} and Question~\ref{q2} is negative.

\subsection{$r=6$}\label{ssec:6}

We first consider the bipartite SR2SEs, i.e., those whose underlying graphs are enumerated by 6.1--6.13 in \cite{BroS}. 

Since 6.1--6.3 have 32 vertices, by Theorem~\ref{the:bip1}, the existence of an SR2SE whose underlying graph is any of them would imply the existence of a proper { equivalence} class of $(16, 6)$-weighing matrices with intersection numbers 0 and 2. According to \cite{wm}, there are exactly 30 classes of $(16, 6)$-weighing matrices. We check their representatives, and none of them has the required intersection numbers. So, there are no solutions in this case. Observing that 6.3 is the folded 5-dimensional cube, we arrive at a negative answer to Question~\ref{q2}.

The underlying graphs 6.4--6.6 are considered in a similar way. These graphs have 36 vertices, and by Theorem~\ref{the:bip2}(ii) there are no corresponding SR2SEs since 6 is not the sum of two squares. In the same way we eliminate 6.9.

We now consider 6.7, 6.8, 6.10 and 6.11. Graphs 6.7 and 6.8 have 40 vertices, each, so we consider proper inequivalent $(20, 6)$-weighing matrices. There are 49 such matrices and exactly two of them have intersection numbers 0 and 2. These are the 46th and the 47th matrix in the corresponding list of \cite{Mun}. According to Theorem~\ref{the:bip1}, each of them gives the unique SR2SE with the corresponding parameters. (The 46th matrix corresponds to 6.8 and the 47th corresponds to 6.7.)
Similarly, 6.10 and 6.11 have 48 vertices. There are 190 proper inequivalent $(24, 6)$-weighing matrices and exactly two of them have intersection numbers 0 and 2: the 2nd and the 18th on \cite{Mun}. Each of them gives the unique SR2SE with the corresponding parameters -- the first corresponds to 6.11 and the second to 6.10.

The graph 6.12 is 5.3$\times K_2$. Since 5.3 is the underlying graph of R5.2, by Corollary~\ref{cor:uniq}(i), we get that 6.12 is the underlying graph of the unique SR2SE isomorphic to R5.2$\ltimes K_2$. Finally, 6.13 is the 6-dimensional cube, and so it is the underlying graph of $\dot{G}_5\ltimes K_2$ (with the notation of Corollary~\ref{cor:Gi}). The uniqueness of $\dot{G}_5\ltimes K_2$ follows by Corollary~\ref{cor:uniq}, since $\dot{G}_4$ is unique.

We proceed with non-bipartite SR2SEs. According to \cite{BroS}, there are exactly two candidates for their underlying graphs, denoted by N6.6 and N6.9. First, N6.6 is eliminated by computer search. Namely, we label its vertices in such a way that the first 7 rows of the adjacency matrix of a putative SR2SE are as in \eqref{schem}. Then consider all possible signatures for the remaining edges and arrive at no solution. Next, as before, we get that N6.9 is the underlying graph of the unique solution R5.2$\ltimes K_2$. 

We denote the seven SR2SEs obtained in this subsection by R6.1--R6.7, respectively.

\subsection{$r=7$ -- a partial result.}

Here we use Corollary~\ref{cor:uniq}(i) to confirm the existence and the uniqueness and Corollary~\ref{cor:uniq}(ii) to prove the non-existence is some particular cases.

There are 40 bipartite rectagraphs with $r=7$. Among them 7.34, 7.35, 7.37--7.40 have the form $G\times K_2$, where $G$ is the underlying graph of a confirmed SR2SE. Therefore, each of them is the underlying graph of an SR2SE whose uniqueness follows from the uniqueness of $G$. The details are given in Table~\ref{tab:tab}. 

Similarly, 7.20, 7.21, 7.23, 7.30--7.32 and 7.36 have the form $G\times K_2$, where $G$ is not the underlying graph of some SR2SE. By Corollary~\ref{cor:uniq}(ii), there is no SR2SE for any of these underlying graphs.
The remaining bipartite rectagraphs remain unresolved.

Again, according to \cite{BroS}, there are 5 non-bipartite rectagraphs with $r=7$, denoted by N7.44, N7.45, N7.51--N7.53. There is no SR2SE with underlying graph N7.45, since N7.45=N6.6$\times K_2$, but N6.6 is not the underlying graph of an SR2SE, as we already showed. There is a unique SR2SE with underlying graph N7.52 isomorphic to R6.7$\ltimes K_2$ (since N7.52=N$6.9\times K_2$).
The remaining non-bipartite rectagraphs remain unresolved.

We denote the seven SR2SEs obtained in this subsection by R7.1--R7.7, respectively.

\begin{table}[h!t]
	\begin{center}
		\caption{SR2SEs with $r\leqslant 7$.}
		\label{tab:tab}
		\begin{tabular}{ccccl} 
			$\#$ & $n$ & Underlying graph&Description&\multicolumn{1}{c}{Reference}\\ 
			\hline
			R1.1&2&1.1, $2^1$&$\dot{G}_1$&Cor.~\ref{cor:Gi} or \cite{Ho...,Sta2}\\
			R2.1&4&2.1, $2^2$&$\dot{G}_2$&Cor.~\ref{cor:Gi} or \cite{Ho...,Sta2}\\
			R3.1&8&3.1, $2^3$&$\dot{G}_3$&Cor.~\ref{cor:Gi} or \cite{Ho...,Sta2}\\
			R4.1&14&4.1&Figure~\ref{fig:small}(a)&\cite{Ho...,Sta2}\\
				R4.2&16&4.2, $2^4$&$\dot{G}_4$&Cor.~\ref{cor:Gi} or \cite{Ho...,Sta2}\\
			R5.1&24&5.2&$W_{12,5}$&Thm.~\ref{the:bip1} or \cite{Ho...,Sta2} \\
			R5.2&28&5.3&$W_{14, 5}$&Thm.~\ref{the:bip1} or \cite{Ho...,Sta2} \\
			R5.3&32&5.4&$D(16, 5)$&Thm.~\ref{the:bip1} or \cite{Ho...,Sta2} \\
			R5.4&16&N5.2, Clebsch graph&Figure~\ref{fig:small}(b), non-bip.&\cite{Ho...,Sta2}\\
			R6.1&40&6.8&$W_{20, 6}$&Thm.~\ref{the:bip1} and \cite{Mun}\\
				R6.2&40&6.7&$W_{20, 6}$&Thm.~\ref{the:bip1} and \cite{Mun}\\
			R6.3&48&6.11&$W_{24,6}$&Thm.~\ref{the:bip1} and \cite{Mun}\\
			R6.4&48&6.10&$W_{24,6}$&Thm.~\ref{the:bip1} and \cite{Mun}\\
			R6.5&56&6.12, 5.3$\times K_2$&R5.2$\ltimes K_2$&Thm.~\ref{the:const} and Cor.~\ref{cor:uniq}(i)\\
			R6.6&64&6.13, $2^6$&$\dot{G}_6$&Cor.~\ref{cor:Gi} and Cor.~\ref{cor:uniq}(i)\\
			R6.7&32&N6.9, N5.2$\times K_2$&R5.4$\ltimes K_2$, non-bip.&Thm.~\ref{the:const} and Cor.~\ref{cor:uniq}(i)\\
			R7.1&80&7.34, 6.7$\times K_2$&R6.2$\ltimes K_2$&Thm.~\ref{the:const} and Cor.~\ref{cor:uniq}(i)\\
			R7.2&80&7.35, 6.8$\times K_2$&R6.1$\ltimes K_2$&Thm.~\ref{the:const} and Cor.~\ref{cor:uniq}(i)\\
			R7.3&96&7.37, 6.10$\times K_2$&R6.4$\ltimes K_2$&Thm.~\ref{the:const} and Cor.~\ref{cor:uniq}(i)\\
			R7.4&96&7.38, 6.11$\times K_2$&R6.3$\ltimes K_2$&Thm.~\ref{the:const} and Cor.~\ref{cor:uniq}(i)\\
			R7.5&112&7.39, 6.12$\times K_2$&R6.5$\ltimes K_2$&Thm.~\ref{the:const} and Cor.~\ref{cor:uniq}(i)\\
			R7.6&128&7.40, $2^7$&$\dot{G}_7$&Cor.~\ref{cor:Gi} and Cor.~\ref{cor:uniq}(i)\\
			R7.7&64&N7.52, N6.9$\times K_2$&R6.7$\ltimes K_2$, non-bip.&Thm.~\ref{the:const} and Cor.~\ref{cor:uniq}(i)\\
		\end{tabular}
	\end{center}
\end{table}

\subsection{Signing the Gewirtz graph} \label{ssec:G}

The Gewirtz graph is the unique strongly regular graph with parameters $(56, 10, 0, 2)$. Following the method applied in the case of N6.6, we consider the signing in this graph in order to show that there is no corresponding SR2SE. After an appropriate labelling and reversing the sign of a fixed set of edges, we arrive at the signed graph such that the first 11 rows of its adjacency matrix are as in \eqref{schem}. The first 11 columns are transpose of these rows. The remaining $45\times 45$ submatrix has fixed zero entries, so it remains to consider all the possible  $\{\pm 1\}$-substitutions for the non-zero positions of this matrix.
This procedure is performed very quickly under the condition that a putative signed graph is an SR2SE. Namely, we get just 8 possibilities for the first row of the submatrix that are compatible with the first 11 rows. By proceeding with the subsequent rows, the computer quickly arrives at no possible solution. So, there is no  SR2SE whose underlying graph is the Gewirtz graph. 
In this way we obtain a negative answer to Question~\ref{q1}. The next possibility for an SR2SE that attains the bound of Theorem~\ref{the:1-4}(i) is a putative signed graph whose underlying graph is also a putative strongly regular graph with parameters $(352, 26, 0, 2)$.

\section{Induced subgraphs of signed graphs with spectrum $\{[-\lambda]^m,[\lambda]^m\}$}
\label{sec:sub}

In this section, we consider the induced subgraphs of a signed graph with spectrum $\{[-\lambda]^m,[\lambda]^m\}$ obtained by deleting one or two vertices.
We are interested in to what extent large subgraphs of such signed graphs are determined by their spectrum.
See \cite{Gre} for a similar consideration for conference matrices.

We will require the use of \emph{eigenvalue interlacing}, which we record as a lemma.

\begin{lemma}[{\cite[Chapter 9]{GoRo}}]
    Let $\dot G$ be a signed graph and let $\dot H$ be an induced subgraph of $\dot G$.
    Suppose $\dot G$ and $\dot H$ have eigenvalues 
    \[
    \lambda_1 \geqslant \lambda_2 \geqslant \dots \geqslant \lambda_n \quad \text{ and } \quad  \mu_1 \geqslant \mu_2 \geqslant \dots \geqslant \mu_m,
    \]
    respectively.
    Then for each $i = 1,2,\dots, m$, we have $ \lambda_{i} \geqslant \mu_i \geqslant \lambda_{n-m+i}$.
\end{lemma}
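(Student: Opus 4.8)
The plan is to deduce the statement from the Courant--Fischer min--max characterisation of the eigenvalues of a real symmetric matrix, applied to the observation that $A_{\dot H}$ is, after reordering the vertices, a principal submatrix of $A_{\dot G}$. First I would fix an ordering of $V(\dot G)$ in which the vertices of $\dot H$ come first, so that $A_{\dot H}$ is the leading $m\times m$ principal submatrix of $A := A_{\dot G}$. Let $S\leqslant\mathbb R^n$ be the coordinate subspace spanned by the first $m$ standard basis vectors, and identify $\mathbb R^m$ with $S$ in the obvious way. The key elementary point is that for every $x\in S$ one has $x^\transpose A x = x^\transpose A_{\dot H} x$ and $x^\transpose x$ is unchanged, so the Rayleigh quotients of $A$ and of $A_{\dot H}$ agree on $S$.

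Next I would recall the two dual forms of Courant--Fischer: for $1\leqslant i\leqslant n$,
\[
\lambda_i \;=\; \max_{\substack{U\leqslant\mathbb R^n\\ \dim U = i}}\ \min_{\substack{x\in U\\ x\neq 0}}\frac{x^\transpose A x}{x^\transpose x}
\;=\; \min_{\substack{U\leqslant\mathbb R^n\\ \dim U = n-i+1}}\ \max_{\substack{x\in U\\ x\neq 0}}\frac{x^\transpose A x}{x^\transpose x},
\]
and likewise for $A_{\dot H}$ with the optimisation ranging over subspaces of $S\cong\mathbb R^m$. For the inequality $\mu_i\leqslant\lambda_i$, I would choose an $i$-dimensional subspace $U\leqslant S$ attaining the max--min value $\mu_i$ for $A_{\dot H}$; since $U$ is also an $i$-dimensional subspace of $\mathbb R^n$ and the Rayleigh quotients agree on it, the max--min form for $A$ gives $\lambda_i\geqslant\min_{0\neq x\in U}(x^\transpose A x)/(x^\transpose x)=\mu_i$. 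For $\mu_i\geqslant\lambda_{n-m+i}$, I would choose an $(m-i+1)$-dimensional subspace $W\leqslant S$ attaining the min--max value $\mu_i$ for $A_{\dot H}$; then $W$ is an $(m-i+1)$-dimensional subspace of $\mathbb R^n$, so the min--max form for $A$ gives $\lambda_{n-m+i}\leqslant\max_{0\neq x\in W}(x^\transpose A x)/(x^\transpose x)=\mu_i$, using that $n-(m-i+1)+1 = n-m+i$.

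There is essentially no genuine obstacle here: the only points requiring care are the index bookkeeping in the min--max form (verifying that subspace dimension $m-i+1$ pairs with eigenvalue index $n-m+i$), and making the identification of $\mathbb R^m$ with the coordinate subspace $S$ precise enough that the two Rayleigh quotients genuinely coincide on $S$. An equally routine alternative is to induct on $n-m$, reducing to the case $m=n-1$ and proving single-step interlacing $\lambda_i\geqslant\mu_i\geqslant\lambda_{i+1}$ directly via a dimension-count argument on an eigenspace of $A$ intersected with $S$; I would remark on this route but present the Courant--Fischer argument as the main proof, as it handles arbitrary $m$ uniformly.
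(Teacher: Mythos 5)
Your proof is correct, and since the paper states this lemma only as a quotation of the classical Cauchy interlacing theorem (citing Godsil--Royle, Chapter~9) without giving its own argument, there is nothing in the paper to diverge from: your Courant--Fischer derivation, including the index check $n-(m-i+1)+1=n-m+i$, is exactly the standard proof found in that reference. No gaps.
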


Let $\dot G$ be a signed graph having spectrum $\{[-\lambda]^m,[\lambda]^m\}$.
Then since $A_{\dot G}^2 = \lambda^2I$, each vertex of $\dot G$ has degree $\lambda^2$.
Now delete a vertex from $\dot G$ to obtain the induced subgraph $\dot H$.
Using eigenvalue interlacing and that the sum of its eigenvalues is $0$, we know that $\dot H$ has spectrum $\{[-\lambda]^{m-1},[0]^1,[\lambda]^{m-1}\}$.
Furthermore, each vertex of $\dot H$ has degree $\lambda^2$ or $\lambda^2-1$.
Therefore, the existence of a signed graph with spectrum $\{[-\lambda]^m,[\lambda]^m\}$ implies the existence of a signed graph with spectrum $\{[-\lambda]^{m-1},[0]^1,[\lambda]^{m-1}\}$ whose vertices have degrees $\lambda^2$ and $\lambda^2-1$.
Now we show that the converse is also true.

\begin{theorem}
\label{thm:reconst1}
    Let $\dot G$ be a signed graph with spectrum $\{[-\lambda]^m,[0]^1,[\lambda]^m\}$.
    Suppose $G$ has vertex degrees in $\{\lambda^2,\lambda^2-1\}$.
    Then $\dot G$ can be extended by a vertex to obtain a signed graph with spectrum $\{[-\lambda]^{m+1},[\lambda]^{m+1}\}$.
\end{theorem}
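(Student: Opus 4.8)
The plan is to reconstruct the missing vertex directly from the spectral data. Write $n = 2m+1$ for the order of $\dot G$ and $A = A_{\dot G}$. Since $\dot G$ has spectrum $\{[-\lambda]^m,[0]^1,[\lambda]^m\}$, the matrix $A^2$ has eigenvalue $\lambda^2$ with multiplicity $2m$ and eigenvalue $0$ with multiplicity $1$; hence $B := \lambda^2 I_n - A^2$ is positive semidefinite of rank $1$, so $B = \mathbf{x}\mathbf{x}^\transpose$ for some real column vector $\mathbf{x} \in \mathbb R^n$, unique up to sign. The diagonal entry $(A^2)_{ii}$ equals the degree of vertex $i$, which by hypothesis is $\lambda^2$ or $\lambda^2-1$; therefore $\mathbf{x}_i^2 = \lambda^2 - (A^2)_{ii} \in \{0,1\}$, so $\mathbf{x}$ is a $\{0,\pm1\}$-vector. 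After a switching (conjugating $A$ by a diagonal $\pm1$ matrix, which preserves the spectrum and the signed-graph structure) we may assume $\mathbf{x} \in \{0,1\}^n$.

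Next I would check that $\mathbf{x}$ has exactly $\lambda^2$ nonzero entries. Indeed $\mathbf{x}^\transpose\mathbf{x} = \operatorname{tr}(B) = \lambda^2\cdot n - \operatorname{tr}(A^2) = \lambda^2 n - (\text{sum of degrees})$; but from $B = \mathbf{x}\mathbf{x}^\transpose$ we also get $B^2 = (\mathbf{x}^\transpose\mathbf{x})B$, and comparing with $B^2 = (\lambda^2 I - A^2)^2 = \lambda^4 I - 2\lambda^2 A^2 + A^4$ together with $A(\lambda^2 I - A^2) = \lambda^2 A - A^3 = 0$ (since the minimal polynomial of $A$ divides $x(x^2-\lambda^2)$) shows $A\mathbf{x} = 0$, i.e. $\mathbf{x}$ lies in the kernel of $A$. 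Now form the $(n+1)\times(n+1)$ matrix
\[
A' = \begin{bmatrix} A & \mathbf{x} \\ \mathbf{x}^\transpose & 0 \end{bmatrix}.
\]
This is symmetric with zero diagonal and $\{0,\pm1\}$ entries, so it is the adjacency matrix of a signed graph $\dot G'$ on $n+1$ vertices. One computes
\[
(A')^2 = \begin{bmatrix} A^2 + \mathbf{x}\mathbf{x}^\transpose & A\mathbf{x} \\ \mathbf{x}^\transpose A & \mathbf{x}^\transpose\mathbf{x} \end{bmatrix} = \begin{bmatrix} A^2 + B & 0 \\ 0 & \lambda^2 \end{bmatrix} = \lambda^2 I_{n+1},
\]
using $A^2 + B = \lambda^2 I_n$, $A\mathbf{x}=0$, and $\mathbf{x}^\transpose\mathbf{x} = \lambda^2$. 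Hence $(A')^2 = \lambda^2 I$, so every eigenvalue of $A'$ is $\pm\lambda$; since $\operatorname{tr}(A') = 0$ the two eigenvalues occur with equal multiplicity $m+1$, giving the spectrum $\{[-\lambda]^{m+1},[\lambda]^{m+1}\}$ as required.

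The one genuine obstacle is justifying that $\mathbf{x}^\transpose\mathbf{x} = \lambda^2$ rather than merely noting it numerically: this is where $A\mathbf{x}=0$ does the work, since $B^2 = (\mathbf{x}^\transpose\mathbf{x})\,B$ forces the common nonzero eigenvalue of the rank-one matrix $B$ to be $\mathbf{x}^\transpose\mathbf{x}$, and that eigenvalue is also $\lambda^2$ because $\ker B = \ker(\lambda^2 I - A^2) = \operatorname{im}$ of the $\lambda^2$-eigenprojection of $A^2$, whose codimension is $1$. An alternative, more hands-on route is to observe that $\mathbf{x}$ is the (up to sign unique) unit-scaled null vector of $A$, record that the number of degree-$(\lambda^2-1)$ vertices equals $\lambda^2$ by a counting argument with $\operatorname{tr}(A^4) = \operatorname{tr}((\lambda^2 A^2 - \lambda^2 B)) $-type identities, and then verify $(A')^2 = \lambda^2 I$ blockwise as above. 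Either way, the remaining verifications — that $A'$ has zero diagonal, symmetric, $\{0,\pm1\}$ entries, and that the extension is connected if one wants $\dot G'$ to be a signed graph in the paper's connected sense (which follows since $\lambda^2 \geqslant 1$ forces the new vertex to have a neighbour, and $(A')^2 = \lambda^2 I > 0$ off-diagonal structure links components) — are routine.
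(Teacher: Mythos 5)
Your proposal is correct and follows essentially the same route as the paper: both form $\lambda^2 I - A_{\dot G}^2$, observe it is a rank-one matrix $\mathbf x\mathbf x^\transpose$ with $\mathbf x$ a $\{0,\pm1\}$-vector because the diagonal entries (degree deficiencies) lie in $\{0,1\}$, and then border $A_{\dot G}$ by $\mathbf x$. The paper leaves the verification that the bordered matrix squares to $\lambda^2 I$ as an ``observe''; your explicit checks that $A\mathbf x=\mathbf 0$ and $\mathbf x^\transpose\mathbf x=\lambda^2$ are exactly the details being elided, and they are correct.
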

\begin{proof}
    The matrix $M := \lambda^2I-A_{\dot G}^2$ is a rank-1 matrix with all diagonal entries equal to $0$ or $1$. 
    Thus $M = \mathbf x \mathbf x^\transpose$, where $\mathbf x$ is a $\{0,\pm 1\}$-vector.
    Now form the matrix
    \[
    B = \begin{bmatrix} 0 & \mathbf x^\transpose \\
                        \mathbf x & A_{\dot G}
    \end{bmatrix}.
    \]
    Observe that $B$ is the adjacency matrix of a signed graph $\dot H$ that has spectrum equal to $\{[-\lambda]^{m+1},[\lambda]^{m+1}\}$, as required.
    
    Since $B^2 = \lambda^2I$, each vertex of $\dot H$ has degree equal to $\lambda^2$.
    Therefore, each vertex of $\dot G$ has degree equal to $\lambda^2$ or $\lambda^2-1$.
\end{proof}

Note that, in Theorem~\ref{thm:reconst1}, we need a degree condition.
Indeed, we can obtain a signed graph $\dot H$ with spectrum $\{[-\lambda]^m,[0]^1,[\lambda]^m\}$ by taking the union of an isolated vertex with a signed graph with spectrum $\{[-\lambda]^m,[\lambda]^m\}$.
But it is straightforward to see that $\dot H$ is not an induced subgraph of any signed graph with spectrum $\{[-\lambda]^{m+1},[\lambda]^{m+1}\}$.



Now suppose we delete two vertices from $\dot G$.
This time the resulting graph will have one of the two possible spectra: $\{[-\lambda]^{m-2},[0]^2,[\lambda]^{m-2}\}$ or $\{[-\lambda]^{m-2},[-1]^1,[1]^1,[\lambda]^{m-2}\}$.
To see that these are the only two possible spectra, let $\dot H$ be the signed graph obtained by deleting two vertices $x$ and $y$ from $\dot G$.
By interlacing and using the fact that the trace of $A_{\dot H}$ is $0$, the spectrum of $\dot H$ has the form $\{[\lambda]^{m-2}, [-\mu]^1, [\mu]^1, [\lambda]^{m-2}\}$ for some $\mu \in \mathbb R$.
Now the trace of $A_{\dot H}^2$ equals either $\tr(A_{\dot G}^2) - 4\lambda^2$ or $\tr(A_{\dot G}^2) - 4\lambda^2+2$ depending on whether or not $x$ and $y$ are adjacent in $G$.
It follows that either $\mu = 0$ or $\mu = 1$.
Furthermore, each vertex of $\dot H$ has degree $\lambda^2$, $\lambda^2-1$, or $\lambda^2-2$.
Therefore, the existence of a signed graph with $\{[-\lambda]^m,[\lambda]^m\}$ implies the existence of a signed graph with one of the two possible spectra: $\{[-\lambda]^{m-2},[0]^2,[\lambda]^{m-2}\}$ or $\{[-\lambda]^{m-2},[-1]^1,[1]^1,[\lambda]^{m-2}\}$ whose vertices have degrees $\lambda^2$, $\lambda^2-1$, or $\lambda^2-2$.
Next we investigate to what extent the converse holds.

We use the symbol $\oplus$ to denote the direct sum of matrices.
We will need the following technical lemma.

\begin{lemma}
    \label{lem:eigenspace}
    Suppose $n \geqslant 3$ and $M$ is a $\mathbb Z$-matrix with spectrum $\{[\lambda]^2,[0]^{n-2}\}$ and whose diagonal entries belong to the set $\{0,1,2\}$.
    Then $M$ is switching isomorphic to one of the following matrices.
    \begin{enumerate}
        \item[(a)] $O_{n-2\lambda} \oplus J_\lambda \oplus J_\lambda$;
        \item[(b)] $O_{n-3\lambda/2} \oplus J_\lambda \oplus 2J_{\lambda/2}$;
        \item[(c)] $O_{n-\lambda} \oplus 2J_{\lambda/2} \oplus 2J_{\lambda/2}$;
        \item[(d)]
        \[
O_{n-\lambda} \oplus \begin{bmatrix}
    2 & 1 &  1 \\
    1 & 2 & -1 \\
    1 & -1 & 2
\end{bmatrix} \otimes J_{\lambda/3};
\]
        \item[(e)]     \[
O_{d_0} \oplus \begin{bmatrix}
    2J_{d_2/2} & O & J & J \\
    O & 2J_{d_2/2} & J & -J \\
    J & J & J_{d_1/2} & O \\
    J & -J & O & J_{d_1/2}
\end{bmatrix},
\]
    \end{enumerate}
where $d_0$, $d_1$, and $d_2$ are the numbers of diagonal entries of $M$ equal to $0$, $1$, and $2$ respectively.
\end{lemma}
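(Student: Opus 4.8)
The plan is to exploit that $M$ has rank $2$ and is positive semidefinite (its nonzero eigenvalues are both $\lambda>0$), so $M = \mathbf a\mathbf a^\transpose + \mathbf b\mathbf b^\transpose$ for some real vectors $\mathbf a,\mathbf b$ spanning the column space; equivalently $M = V V^\transpose$ where $V$ is $n\times 2$ with orthogonal columns of squared norm $\lambda$. First I would record that since $M$ is a $\mathbb Z$-matrix of rank $2$ with bounded diagonal, each row $\mathbf v_i^\transpose$ of $V$ (a vector in $\mathbb R^2$) satisfies $\|\mathbf v_i\|^2 = M_{ii} \in\{0,1,2\}$ and $\langle \mathbf v_i,\mathbf v_j\rangle = M_{ij}\in\mathbb Z$. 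A row with $M_{ii}=0$ is the zero row, contributing to the $O_{d_0}$ block; so after permuting we may assume the first $d_0$ rows vanish and work with the $(n-d_0)\times 2$ matrix of nonzero rows, every one of squared norm $1$ or $2$.

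The heart of the argument is a finiteness/rigidity statement: the vectors $\mathbf v_i$ live in $\mathbb R^2$, have squared norm in $\{1,2\}$, and pairwise integer inner products. Writing $\mathbf v_i=\|\mathbf v_i\|(\cos\theta_i,\sin\theta_i)$, for two rows of norms in $\{1,\sqrt2\}$ the Cauchy--Schwarz bound forces $\langle\mathbf v_i,\mathbf v_j\rangle\in\{0,\pm1\}$ when not both have norm $\sqrt2$, and $\in\{0,\pm1,\pm2\}$ when both do, with the extreme values attained only for collinear vectors. I would then argue that, up to an orthogonal change of basis of $\mathbb R^2$ (which does not change $M=VV^\transpose$) together with sign changes of individual rows (switching isomorphism), only finitely many configurations of directions can occur: the unit-norm rows must all be parallel to one of at most two fixed orthogonal directions, and the $\sqrt2$-norm rows must point along a small set of directions making integer inner products with those. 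Carefully enumerating which direction-sets are mutually compatible is exactly what produces the five cases: (a) two orthogonal unit directions only; (b) one unit direction plus one $\sqrt2$-direction orthogonal to it; (c) two orthogonal $\sqrt2$-directions; (d) three $\sqrt2$-directions at mutual angles $60^\circ$ (whose Gram matrix of unit representatives is the displayed $3\times3$ matrix divided by $2$, scaling to $J_{\lambda/3}$ blocks); (e) the mixed configuration with both two $\sqrt2$-directions and two unit directions present, yielding the $4\times4$ block form. Within each case, rows sharing a direction (up to sign) are switching-equivalent, so grouping them gives $J$- and $2J$-blocks of the stated sizes, and the orthogonality relations $V^\transpose V=\lambda I_2$ fix the block sizes in terms of $\lambda$ and the $d_i$.

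Concretely the steps are: (1) reduce to $M$ positive semidefinite of rank $2$ and factor $M=VV^\transpose$; (2) split off the zero rows as $O_{d_0}$; (3) classify the possible directions of the nonzero rows in $\mathbb R^2$ using the integrality and norm constraints, obtaining the five configuration types; (4) within each type, use switching isomorphism to make all rows of a given direction equal, collapsing $M$ into block form; (5) use $V^\transpose V = \lambda I_2$ (equivalently, count rows of each norm via $d_1,d_2$ and use that the two columns of $V$ are orthogonal of norm $\sqrt\lambda$) to pin down the block sizes, which forces $\lambda$ to be even in cases (c),(e), divisible by $3$ in case (d), etc. The main obstacle I expect is step~(3): being exhaustive about which sets of directions can coexist — in particular not missing the mixed case (e) and correctly seeing that no configuration with, say, three unit directions or four $\sqrt2$-directions survives the rank-$2$ constraint. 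A clean way to organize this is to note that the Gram matrix of the distinct direction-representatives must itself be a rank-$\le2$ positive semidefinite integer (or half-integer) matrix with small entries, and to enumerate those small Gram matrices directly.
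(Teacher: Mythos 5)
Your proposal is correct and rests on the same underlying fact as the paper's proof --- that a rank-$2$ positive semidefinite integer matrix with diagonal in $\{0,1,2\}$ is the Gram matrix of vectors in $\mathbb R^2$ of squared norm at most $2$ with pairwise integer inner products --- but you organize the classification differently. The paper never writes the factorization $M=VV^\transpose$; it instead enumerates, up to switching, the small ($3\times3$, $4\times4$, $5\times5$) positive semidefinite rank-$\leqslant 2$ principal submatrices extending a given pattern, splitting into cases according to whether two diagonal-$2$ indices meet in $\pm1$, whether a diagonal-$1$ index meets a diagonal-$2$ index nontrivially, and so on, and then uses $M^2=\lambda M$ together with $\operatorname{tr}(M)=2\lambda=d_1+2d_2$ to pin down the block sizes. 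Your angular classification makes the exhaustiveness of the case analysis more transparent (unit vectors pairwise at $0^\circ$ or $90^\circ$; $\sqrt2$-vectors pairwise at $0^\circ$, $60^\circ$, $90^\circ$ or $120^\circ$; mixed pairs at $45^\circ$ or $90^\circ$), at the price of having to invoke $V^\transpose V=\lambda I_2$ at the end to eliminate configurations whose Gram matrices are admissible but which cannot have two \emph{equal} nonzero eigenvalues --- e.g.\ two $\sqrt2$-directions at $60^\circ$ without the third, or a single $\sqrt2$-direction accompanied by a unit direction at $45^\circ$; this is exactly the role played by $M^2=\lambda M$ in the paper. Both routes are complete, so the only thing to watch in writing yours up is that the interaction of your steps (3) and (5) really does kill all such spurious direction sets and forces the equal-multiplicity conditions ($\lambda/3$ in case (d), $d_1/2$ and $d_2/2$ in case (e)).
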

\begin{proof}
    First note that $M^2 = \lambda M$ and that $\operatorname{tr}(M) = 2\lambda = d_1 + 2d_2$.
    Partition $\{1,2,\dots,n\}$ in to parts $V_0$, $V_1$, $V_2$, where each $V_i = \{ j \; |\; M[j,j] = i\}$.
    
    Suppose that there exist $x \in V_2$ and $y \in V_2$ such that $M[x,y] = \pm 1$.
    Without loss of generality, we may assume that $M[x,y]=1$.
    Up to switching isomorphism, there are just two $3\times 3$ positive semidefinite $\mathbb Z$-matrices with rank at most $2$ whose leading $2 \times 2$ principal submatrix is $\left [\begin{smallmatrix}2 & 1 \\ 1& 2 \end{smallmatrix}\right ]$ and whose diagonal entries belong to $\{1,2\}$:
    \[
    \begin{bmatrix}
        2 & 1 & 1 \\
        1 & 2 & -1 \\
        1 & -1 & 2
    \end{bmatrix},
            \quad
        \text{ and }
        \quad
    \begin{bmatrix}
        2 & 1 & 2 \\
        1 & 2 & 1 \\
        2 & 1 & 2
    \end{bmatrix}.
    \]
    Therefore, for all $u \in V_2$ and $v \in V_2$, we must have $M[u,v] = \pm 1$ or $M[u,v] = \pm 2$.
    Furthermore, $d_1$ must be equal to $0$.
    Thus $d_2= \lambda$.
    Let $k$ be the number of elements $v \in V_2$ such that $M[u,v] = \pm 2$.
    Then $2\lambda = 4k + (\lambda-k)$, which implies that $k = \lambda/3$.
    Since $M$ has a positive eigenvalue with multiplicity $2$, there must exist $z \in V_2$ such that $M[x,z] = \pm 1$ and $M[y,z]=\pm 1$.
    It follows that $M$ has the form $O_{n-\lambda} \oplus N$, where 
        \[
 N = \begin{bmatrix}
    2J_{a} & J & J  \\
    J & 2J_{b} & -J  \\
    J & -J & 2J_{c} 
\end{bmatrix},
\]
where $a + b + c = \lambda$.
   The only way we can satisfy $M^2 = \lambda M$ is when $a = b = c = \lambda/3$.
   Thus, we must have
            \[
M = O_{n-\lambda} \oplus \begin{bmatrix}
    2 & 1 &  1 \\
    1 & 2 & -1 \\
    1 & -1 & 2
\end{bmatrix} \otimes J_{\lambda/3}.
\]

    Next suppose that there exist $w \in V_1$ and $y \in V_2$ such that $M[w,y] \ne 0$.
    Without loss of generality, we may assume that $M[w,y]=1$.
    Up to switching isomorphism, there are just four $3\times 3$ positive semidefinite $\mathbb Z$-matrices with rank at most $2$ whose leading $2 \times 2$ principal submatrix is $\left [\begin{smallmatrix}1 & 1 \\ 1& 2 \end{smallmatrix}\right ]$ and whose diagonal entries belong to $\{1,2\}$:
    \[
    \begin{bmatrix}
        1 & 1 & 1 \\
        1 & 2 & 1 \\
        1 & 1 & 1
    \end{bmatrix},
    \quad
    \begin{bmatrix}
        1 & 1 & 0 \\
        1 & 2 & 1 \\
        0 & 1 & 1
    \end{bmatrix},
        \quad
    \begin{bmatrix}
        1 & 1 & 1 \\
        1 & 2 & 2 \\
        1 & 2 & 2
    \end{bmatrix},
        \quad
        \text{ and }
        \quad
    \begin{bmatrix}
        1 & 1 & 1 \\
        1 & 2 & 0 \\
        1 & 0 & 2
    \end{bmatrix}.
    \]
    Therefore, for all $u \in V_1$ and $v \in V_2$, we must have $M[u,v] = \pm 1$.
    
    Next we claim that there must exist $x \in V_1$ such that $M[w,x] = 0$.
    Suppose for a contradiction that, for all $u \in V_1$ we have $M[u,w] = \pm 1$.
    Then the {$(w,w)$}-entry of $M^2$ would equal $d_1 + d_2$, which is strictly larger than $\lambda = d_1/2+d_2$ since both $d_1 \geqslant 1$ and $d_2 \geqslant 1$.
    Thus we have established a contradiction.
    A similar argument shows that there must also exist $z \in V_2$ such that $M[y,z] = 0$.
    
    This means that, up to switching isomorphism, $M$ must contain the submatrix
    \[
    S= \begin{bmatrix}
    2 & 0 & 1 & 1 \\
    0 & 2 & 1 & -1 \\
    1 & 1 & 1 & 0 \\
    1 & -1 & 0 & 1
\end{bmatrix},
\]
    which is, up to switching isomorphism, the only $4 \times 4$ positive semidefinite $\mathbb Z$-matrix with rank at most $2$, whose leading $3 \times 3$ principal submatrix is $\left [\begin{smallmatrix}2 & 0 & 1 \\ 0& 2 & 1 \\ 1 & 1 & 1 \end{smallmatrix}\right ]$ and has $(4,4)$-entry equal to $1$.
    
    Up to switching isomorphism, there are just four $5\times 5$ positive semidefinite $\mathbb Z$-matrices with rank at most $2$ whose leading $4 \times 4$ principal submatrix is $S$ and whose diagonal entries belong to $\{1,2\}$:
    \begin{align*}
    &\begin{bmatrix}
    2 & 0 & 1 & 1 & 1 \\
    0 & 2 & 1 & -1 & -1\\
    1 & 1 & 1 & 0 & 0 \\
    1 & -1 & 0 & 1 & 1 \\
    1 & -1 & 0 & 1 & 1
\end{bmatrix},
    \quad
    \begin{bmatrix}
    2 & 0 & 1 & 1 & 1\\
    0 & 2 & 1 & -1 & 1\\
    1 & 1 & 1 & 0 & 1\\
    1 & -1 & 0 & 1 & 0\\
    1 & 1 & 1 & 0 & 1
\end{bmatrix}, \\
    &\begin{bmatrix}
    2 & 0 & 1 & 1 & 0\\
    0 & 2 & 1 & -1 & 2\\
    1 & 1 & 1 & 0 & 1\\
    1 & -1 & 0 & 1 & -1 \\
    0 & 2 & 1 & -1 & 2
\end{bmatrix},
        \quad
    \begin{bmatrix}
    2 & 0 & 1 & 1 & 2\\
    0 & 2 & 1 & -1 & 0\\
    1 & 1 & 1 & 0 & 1\\
    1 & -1 & 0 & 1 & 1 \\
    2 & 0 & 1 & 1 & 2
\end{bmatrix}.
    \end{align*}
    
    It follows that $M$ has the form $O_{d_0} \oplus N$, where 
        \[
 N = \begin{bmatrix}
    2J_{a} & O & J & J \\
    O & 2J_{b} & J & -J \\
    J & J & J_{c} & O \\
    J & -J & O & J_{d}
\end{bmatrix},
\]
and $a+b = d_2$ and $c+d = d_1$.
The nonzero eigenvalues of $N$ are $\lambda \pm \sqrt{4(a-b)^2+(c-d)^2}$.
Therefore, we must have $a = b = d_2/2$ and $c = d = d_1/2$.

Otherwise, if $M[w,y] = 0$ for all $w \in V_1$ and $y \in V_2$ then it is straightforward to deduce that $M$ is switching isomorphic to  $O_{d_0} \oplus J_\lambda \oplus J_\lambda$, $O_{d_0} \oplus J_\lambda \oplus 2J_{\lambda/2}$, or $O_{d_0} \oplus 2J_{\lambda/2} \oplus 2J_{\lambda/2}$.
\end{proof}

Now we are ready to show a sufficient condition on the degrees of a signed graph with spectrum $\{[-\lambda]^{m-2},[-1]^1,[1]^1,[\lambda]^{m-2}\}$ that ensures it is an induced subgraph of a signed graph with spectrum $\{[-\lambda]^{m-1},[0]^1,[\lambda]^{m-1}\}$.

\begin{theorem}
    \label{thm:extend41}
    Let $\dot G$ be a signed graph with spectrum $\{[-\lambda]^{m-2},[-1]^1,[1]^1,[\lambda]^{m-2}\}$.
    Suppose the vertices of $G$ have degrees $\lambda^2$, $\lambda^2-1$, or $\lambda^2-2$, with at least one having degree $\lambda^2-1$.
    Then $\dot G$ can be extended by a vertex to obtain a signed graph with spectrum $\{[-\lambda]^{m-1},[0]^1,[\lambda]^{m-1}\}$ and vertex-degrees $\lambda^2$ or $\lambda^2-1$.
\end{theorem}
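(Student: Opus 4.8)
The plan is to build the required signed graph by attaching a single vertex, much as in the proof of Theorem~\ref{thm:reconst1}, after first invoking Lemma~\ref{lem:eigenspace} to control the structure of $M:=\lambda^2 I_n-A^2$, where $A:=A_{\dot G}$ and $n$ is the order of $\dot G$. Since the minimal polynomial of $A$ divides $(x^2-\lambda^2)(x^2-1)$, the matrix $A^2$ has eigenvalue $\lambda^2$ with multiplicity $2m-4$ and eigenvalue $1$ with multiplicity $2$; hence $M$ is positive semidefinite with spectrum $\{[\mu]^2,[0]^{n-2}\}$, where $\mu:=\lambda^2-1$, and, as $A$ has zero diagonal, $M_{ii}=\lambda^2-\deg(i)\in\{0,1,2\}$ by the degree hypothesis. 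After a suitable switching isomorphism (which preserves the spectrum and the degree sequence) we may assume $M$ is exactly one of the matrices (a)--(e) of Lemma~\ref{lem:eigenspace}, with $\mu$ in the role of $\lambda$. Writing $d_0,d_1,d_2$ for the numbers of vertices of $\dot G$ of degrees $\lambda^2,\lambda^2-1,\lambda^2-2$ (these are the $d_i$ of the lemma), the hypothesis that some vertex has degree $\lambda^2-1$ reads $d_1\geqslant 1$, which rules out cases (c) and (d).

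For the construction, let $U$ be the $\mu$-eigenspace of $M$, equivalently the $1$-eigenspace of $A^2$; it is $2$-dimensional and $A$-invariant, and $A|_U$ is a symmetric involution of trace $0$, hence has eigenvalues $+1,-1$. Suppose we have a $\{0,\pm1\}$-vector $\mathbf y\in U$ with $\|\mathbf y\|^2=\mu$, whose support avoids $V_0:=\{i:M_{ii}=0\}$ and contains $V_2:=\{i:M_{ii}=2\}$, and with $\mathbf y^\transpose A\mathbf y=0$. Put $\mathbf z:=A\mathbf y$; then $\mathbf z\in U$, $\mathbf z^\transpose\mathbf y=\mathbf y^\transpose A\mathbf y=0$, $\|\mathbf z\|^2=\mathbf y^\transpose A^2\mathbf y=\|\mathbf y\|^2=\mu$, and $A\mathbf z=A^2\mathbf y=\mathbf y$; thus $\{\mathbf y,\mathbf z\}$ is an orthogonal basis of $U$ of vectors of squared norm $\mu$, so $M=\mathbf y\mathbf y^\transpose+\mathbf z\mathbf z^\transpose$. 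Now set
\[
B:=\begin{bmatrix} 0 & \mathbf y^\transpose \\ \mathbf y & A \end{bmatrix}.
\]
A short block computation gives $B^2=\begin{bmatrix}\lambda^2-1 & \mathbf z^\transpose \\ \mathbf z & \lambda^2 I_n-\mathbf z\mathbf z^\transpose\end{bmatrix}$, hence $\lambda^2 I_{n+1}-B^2=\mathbf w\mathbf w^\transpose$ with $\mathbf w:=(1,-\mathbf z^\transpose)^\transpose$, and $B\mathbf w=\mathbf 0$. Therefore $B(B^2-\lambda^2 I)=\mathbf 0$, so the symmetric matrix $B$ has all eigenvalues in $\{0,\pm\lambda\}$; since $\tr(B)=0$ its spectrum is $\{[-\lambda]^a,[0]^b,[\lambda]^a\}$, and applying eigenvalue interlacing to $\dot G$ as an induced subgraph (and using that $\dot G$ has the positive eigenvalue $1$) forces $a=m-1$, $b=1$. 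Finally $\lambda^2-\deg(i)$ is the $i$-th diagonal entry of $\mathbf w\mathbf w^\transpose$, hence lies in $\{0,1\}$, so the extended signed graph has all vertex-degrees equal to $\lambda^2$ or $\lambda^2-1$, as claimed.

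It remains to produce such a vector $\mathbf y$ in the surviving cases (a), (b), (e); this is the step I expect to be the main obstacle. In cases (a) and (e) the space $U$ has an explicit orthogonal basis $\{\mathbf y_1,\mathbf y_2\}$ of $\{0,\pm1\}$-vectors of squared norm $\mu$ -- the indicators of the two $J_\mu$-blocks in case (a), and $\mathbf 1_{P\cup Q\cup R}$ together with $\mathbf 1_P-\mathbf 1_Q+\mathbf 1_S$ in case (e), $P,Q,R,S$ being the four blocks there -- each of which vanishes on $V_0$ and has support containing $V_2$. Since $A^2|_U=I_U$, writing $A\mathbf y_1=\rho\mathbf y_1+\tau\mathbf y_2$ gives $\rho^2+\tau^2=1$, i.e.\ $(\mathbf y_1^\transpose A\mathbf y_1)^2+(\mathbf y_1^\transpose A\mathbf y_2)^2=\mu^2$, while $\mathbf y_1^\transpose A\mathbf y_1$ is an even integer; so $\mathbf y_1^\transpose A\mathbf y_1$ and $\mathbf y_2^\transpose A\mathbf y_2$ can both be nonzero only if $A$ maps each $\mathbf y_j$ to $\pm\mathbf y_j$, that is, only if $\mathbf y_j$ is a $\{0,\pm1\}$-eigenvector of $A$ for an eigenvalue $\pm1$. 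The hard part will be to exclude these \emph{diagonal} configurations, using the exact form of $A^2=\lambda^2 I-M$ together with the vanishing of the diagonal of $A$; and, in the same spirit, to show that case (b) cannot occur at all -- there the unique $\{0,\pm1\}$-vector in $U$ of squared norm $\mu$ is the indicator of the $J_\mu$-block, which is disjoint from $V_2$ (so the degree requirement would fail), and in fact the $2J_{\mu/2}$-block already forces the indicator of that block to be a $\{0,\pm1\}$-eigenvector of $A$. Once these exclusions are in place, a suitable $\mathbf y$ exists and the construction of the previous paragraph completes the proof.
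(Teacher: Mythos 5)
Your reduction to Lemma~\ref{lem:eigenspace} (with cases (c) and (d) excluded by $d_1\geqslant 1$) and your bordered-matrix construction are both correct and essentially match the paper's strategy; the identity $\lambda^2I_{n+1}-B^2=\mathbf w\mathbf w^\transpose$ with $\mathbf w=(1,-\mathbf z^\transpose)^\transpose$ is in fact a clean way to read off the spectrum and the degrees simultaneously. The problem is that everything is conditional on producing a $\{0,\pm1\}$-vector $\mathbf y$ in the top eigenspace with $\mathbf y^\transpose\mathbf y=\lambda^2-1$ and $\mathbf y^\transpose A\mathbf y=0$, and you have explicitly deferred exactly the two exclusions on which the theorem stands or falls. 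Moreover, the one reduction you do offer is not sound as stated: from $\rho^2+\tau^2=1$ with $\rho\mu$ an even integer and $\tau\mu$ an integer one cannot conclude $\rho\tau=0$ (take $\mu=5$, $\rho=4/5$, $\tau=3/5$). What saves this step is that in cases (a) and (e) neither of the supports of $\mathbf y_1$, $\mathbf y_2$ contains the other, so evaluating $A\mathbf y_1=\rho\mathbf y_1+\tau\mathbf y_2$ at a coordinate where only one of them is nonzero shows $\rho,\tau\in\mathbb Z$, whence $(\rho,\tau)=(\pm1,0)$ or $(0,\pm1)$ and $A\mathbf y_1$ is again a $\{0,\pm1\}$-vector.

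The missing key idea is a parity argument ruling out $A\mathbf y_1=\pm\mathbf y_1$. Since $A$ has zero diagonal, every diagonal entry of $A^3$ is even (each product $A_{ij}A_{jk}A_{ki}$ with $i,j,k$ distinct is counted twice). On the other hand, $M=\mathbf y_1\mathbf y_1^\transpose+\mathbf y_2\mathbf y_2^\transpose$ and, using $d_1\geqslant 1$, there is an index $i$ in the support of $\mathbf y_1$ but not of $\mathbf y_2$, so the $i$th row of $M$ equals $\pm\mathbf y_1^\transpose$; if $A\mathbf y_1=\pm\mathbf y_1$ then $(AM)_{ii}=\pm(A\mathbf y_1)_i=\pm1$ is odd, contradicting $(AM)_{ii}=\lambda^2A_{ii}-(A^3)_{ii}=-(A^3)_{ii}$. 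Hence $A\mathbf y_1=\pm\mathbf y_2$, which settles cases (a) and (e) and also disposes of case (b): there the eigenspace contains no $\{0,\pm1\}$-vector of squared norm $\lambda^2-1$ orthogonal to $\mathbf y_1$ (the only candidate has squared norm $(\lambda^2-1)/2$), so case (b) cannot occur at all. Note that your remark that ``the degree requirement would fail'' in case (b) does not exclude that case; it only shows your construction would not apply to it. Until these points are supplied, the proof is incomplete at its crux.
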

\begin{proof}
    Let $A$ be the adjacency matrix of $\dot G$.
    The matrix $M := \lambda^2I-A^2$ is a $\mathbb Z$-matrix with spectrum $\{[\lambda^2-1]^{2},[0]^{2m-4}\}$ and all diagonal entries equal to $0$, $1$, or $2$. 
    By Lemma~\ref{lem:eigenspace}, $M$ is switching isomorphic to $O_{d_0} \oplus J_{\lambda^2-1} \oplus J_{\lambda^2-1}$,
     $O_{d_0} \oplus J_{\lambda^2-1} \oplus 2J_{(\lambda^2-1)/2}$, or
    \[
O_{d_0} \oplus \begin{bmatrix}
    2J_{d_2/2} & O & J & J \\
    O & 2J_{d_2/2} & J & -J \\
    J & J & J_{d_1/2} & O \\
    J & -J & O & J_{d_1/2}
\end{bmatrix},
\]
where $d_i$ denotes the number of vertices of $G$ of degree $\lambda^2-i$.
   In each case, we observe that there are two mutually orthogonal $\{0,\pm 1\}$-vectors $\mathbf x$ and $\mathbf y$ in the $(\lambda^2-1)$-eigenspace $\mathcal E$ of $M$ and we can assume that $\mathbf x^\transpose \mathbf x = \lambda^2-1$.
    
        Since $A\mathbf x$ is in $\mathcal E$, we can write $A\mathbf x = a\mathbf x + b\mathbf y$.
    Then
    $(A \mathbf x)^\transpose A \mathbf x = (\lambda^2-1) = a^2(\lambda^2-1)+b^2\mathbf y^\transpose \mathbf y$.
    Since $A\mathbf x$ has integer entries and the supports of $\mathbf x$ and $\mathbf y$ are distinct for each of the cases above, we must have $a, b \in \mathbb Z$.
    Thus, either $(a,b) = (\pm 1,0)$ or $(0,\pm 1)$.
    In either case, note that $A\mathbf x$ is a $\{0,\pm 1 \}$-vector.
    
    Next we show that $\mathbf x$ cannot be a $\pm 1$-eigenvector of $A$.
    Observe that, in each of the three cases for $M$ above, the row vector $\pm \mathbf x^\transpose$ is a row (say the {$i$}th row) of the matrix $M$.
    The diagonal entries of $A^3$ are all even and $AM = \lambda^2A - A^3$.
    If $\mathbf x$ were an $\pm 1$-eigenvector of $A$ then the {$(i,i)$}-entry of $AM$ would be odd, which implies the same for $A^3$.
    Therefore $A\mathbf x = \pm \mathbf y$.
    Furthermore, $A\mathbf x$ is orthogonal to $\mathbf x$.
    This rules out the possibility that $M$ were switching isomorphic to $O_{d_0} \oplus J_{\lambda^2-1} \oplus 2J_{(\lambda^2-1)/2}$.
    Indeed, in this case, there is no $\{0,\pm 1 \}$-vector $\mathbf z$ in $\mathcal E$ with $\mathbf z^\transpose \mathbf z$ that is orthogonal to $\mathbf x$.
    Now form the matrix
    \[
    B = \begin{bmatrix} 0 & \mathbf x^\transpose \\
                        \mathbf x & A
    \end{bmatrix},
    \]
    which is the adjacency matrix of a signed graph $\dot H$.
    The vector $( 0, \mathbf v^\transpose)^\transpose$ is a $\pm \lambda$-eigenvector for $B$ when $\mathbf v$ is a $\pm \lambda$-eigenvector of $A$.
    
    Furthermore, the vector $( -1, \mathbf x^\transpose A)^\transpose$ is a $0$-eigenvector for $B$.
    Since the trace of $B$ is $0$, we see that the spectrum of $B$ is equal to $\{[-\lambda]^{m-2},[-\alpha]^1,[0]^1,[\alpha]^1,[\lambda]^{m-2}\}$, for some $\alpha \in \mathbb R$.
    On the other hand, the trace of $B^2$ is equal to $\mathbf x^\transpose \mathbf x + \operatorname{tr}(\mathbf x \mathbf x^\transpose + A^2)$, which equals $2(\lambda^2-1) + (2m-4)\lambda^2+2 = (2m-2)\lambda^2$.
    Thus, $\alpha = \lambda$.
    Lastly, it is straightforward to check that the degrees of the vertices of $\dot H$ are $\lambda^2$ and $\lambda^2-1$.
\end{proof}

In order to strengthen Proposition~\ref{thm:extend41} further, by examining its proof, we are led to the following questions.

\begin{question}
Let $\dot G$ be a signed graph with spectrum $\{[-\lambda]^{m-2},[-1]^1,[1]^1,[\lambda]^{m-2}\}$.
Suppose the vertices of $G$ have degrees $\lambda^2$, $\lambda^2-1$, or $\lambda^2-2$.
Is it possible for $\lambda^2 I - A_{\dot G}^2$ to be switching isomorphic to
\[O_{2m-1-\lambda^2} \oplus \begin{bmatrix}
    2 & 1 &  1 \\
    1 & 2 & -1 \\
    1 & -1 & 2
\end{bmatrix} \otimes J_{(\lambda^2-1)/3}?
\]
\end{question}

\begin{question}
Let $\dot G$ be a signed graph with spectrum $\{[-\lambda]^{m-2},[-1]^1,[1]^1,[\lambda]^{m-2}\}$.
Suppose the vertices of $G$ have degrees $\lambda^2$, $\lambda^2-1$, or $\lambda^2-2$.
If $\lambda^2 I - A_{\dot G}^2 = O_{2m-1-\lambda^2} \oplus 2J_{(\lambda^2-1)/2} \oplus 2J_{(\lambda^2-1)/2}$ then does it imply that $\dot G$ is an induced subgraph of a signed graph with spectrum $\{[-\lambda]^{m},[\lambda]^{m}\}$?
\end{question}



Next we show a sufficient condition on the degrees of a signed graph with spectrum $\{[-\lambda]^{m-2},[0]^2,[\lambda]^{m-2}\}$ that ensures it is an induced subgraph of a signed graph with spectrum $\{[-\lambda]^{m-1},[0]^1,[\lambda]^{m-1}\}$.

\begin{theorem}
\label{thm:extend40}
    Let $\dot G$ be a signed graph with spectrum $\{[-\lambda]^{m-2},[0]^2,[\lambda]^{m-2}\}$.
     Suppose the vertices of $G$ have degrees $\lambda^2$, $\lambda^2-1$, or $\lambda^2-2$, with at least $\lambda^2+1$ having degree $\lambda^2-1$.
    Then $\dot G$ can be extended by a vertex to obtain a signed graph with spectrum $\{[-\lambda]^{m-1},[0]^1,[\lambda]^{m-1}\}$ and vertex-degrees $\lambda^2$ or $\lambda^2-1$.
\end{theorem}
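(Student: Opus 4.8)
The plan is to imitate the proof of Theorem~\ref{thm:reconst1} by bordering $A_{\dot G}$ with one extra row and column. Write $A$ for the adjacency matrix of $\dot G$, so $A$ is a $(2m-2)\times(2m-2)$ matrix, and set $M := \lambda^2 I - A^2$. Since $A$ has spectrum $\{[-\lambda]^{m-2},[0]^2,[\lambda]^{m-2}\}$, the matrix $M$ is a $\mathbb Z$-matrix with spectrum $\{[\lambda^2]^2,[0]^{2m-4}\}$, and its $j$th diagonal entry equals $\lambda^2$ minus the degree of vertex $j$, hence lies in $\{0,1,2\}$. Thus Lemma~\ref{lem:eigenspace} applies with its ``$\lambda$'' equal to $\lambda^2$ and its ``$n$'' equal to $2m-2$, so $M$ is switching isomorphic to one of the forms (a)--(e). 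Let $V_i$ be the set of vertices $j$ with $M[j,j]=i$ and $d_i=|V_i|$; the hypothesis reads $d_1\geqslant\lambda^2+1$, and $\tr(M)=d_1+2d_2=2\lambda^2$. Inspecting the five forms, form (b) has $d_1=\lambda^2$ while forms (c) and (d) have $d_1=0$, so the hypothesis forces $M$ to be switching isomorphic to $O_{d_0}\oplus J_{\lambda^2}\oplus J_{\lambda^2}$ (form (a)) or to form (e). As switching isomorphism preserves the $d_i$ and the degree hypothesis, and as it suffices to extend a switching-isomorphic copy of $\dot G$, I would assume from here on that $M$ is equal to one of these two matrices.

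The next step is to exhibit a $\{0,1\}$-vector $\mathbf x$ in the $\lambda^2$-eigenspace $\mathcal E$ of $M$ with $\mathbf x^\transpose\mathbf x=\lambda^2$ whose support is disjoint from $V_0$ and contains $V_2$. In form (a) take $\mathbf x=(\mathbf 0_{d_0},\mathbf 1_{\lambda^2},\mathbf 0_{\lambda^2})^\transpose$; in form (e) take $\mathbf x=(\mathbf 0_{d_0},\mathbf 1_{d_2/2},\mathbf 1_{d_2/2},\mathbf 1_{d_1/2},\mathbf 0_{d_1/2})^\transpose$. Using $d_1/2+d_2=\lambda^2$ one checks that $\mathbf x^\transpose\mathbf x=\lambda^2$ in both cases, and a routine block multiplication verifies $M\mathbf x=\lambda^2\mathbf x$; since the $\lambda^2$-eigenspace of $M$ equals $\ker A^2=\ker A$, this gives $A\mathbf x=\mathbf 0$.

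Finally, form the bordered matrix $B=\left(\begin{smallmatrix}0 & \mathbf x^\transpose \\ \mathbf x & A\end{smallmatrix}\right)$, a symmetric $\{0,\pm1\}$-matrix with zero diagonal, hence the adjacency matrix of a signed graph $\dot H$ that has $\dot G$ as the induced subgraph on its last $2m-2$ vertices. Because $A\mathbf x=\mathbf 0$ and $\mathbf x^\transpose\mathbf x=\lambda^2$, we get $B^2=[\lambda^2]\oplus(\mathbf x\mathbf x^\transpose+A^2)$; since $\mathbf x$ spans a line inside the two-dimensional kernel of $A^2$ and is orthogonal to the range of $A$, the rank-one perturbation $\mathbf x\mathbf x^\transpose$ turns exactly one zero eigenvalue of $A^2$ into $\lambda^2$, so $B^2$ has spectrum $\{[\lambda^2]^{2m-2},[0]^1\}$ and, using $\tr(B)=\tr(A)=0$, the spectrum of $\dot H$ is $\{[-\lambda]^{m-1},[0]^1,[\lambda]^{m-1}\}$. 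The new vertex has degree $\mathbf x^\transpose\mathbf x=\lambda^2$, while an old vertex $j$ has degree $(\lambda^2-M[j,j])+|\mathbf x_j|$, which lies in $\{\lambda^2-1,\lambda^2\}$ precisely because $\operatorname{supp}(\mathbf x)$ avoids $V_0$ and contains $V_2$. I expect the main obstacle to be the bookkeeping around Lemma~\ref{lem:eigenspace}: one must confirm that the hypothesis $d_1\geqslant\lambda^2+1$ is exactly strong enough to eliminate forms (b), (c), (d), and that in the two surviving forms the obvious candidate vector simultaneously lies in $\mathcal E$, has squared length $\lambda^2$, and has a support compatible with the target degree set. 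Unlike in Theorem~\ref{thm:extend41}, no parity argument is needed here, because $\mathbf x$ is a $0$-eigenvector of $A$ rather than a $\pm1$-eigenvector.
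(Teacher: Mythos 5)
Your proposal is correct and follows essentially the same route as the paper: reduce to Lemma~\ref{lem:eigenspace}, use the hypothesis $d_1\geqslant\lambda^2+1$ to rule out forms (b)--(d), pick a $\{0,\pm1\}$-vector $\mathbf x$ in the $\lambda^2$-eigenspace of $M$ with $\mathbf x^\transpose\mathbf x=\lambda^2$ and support containing $V_2$, and border $A_{\dot G}$ by $\mathbf x$. The only difference is that you write out the eigenvectors and the elimination of cases explicitly where the paper merely asserts them, which is a harmless (indeed helpful) elaboration.
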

\begin{proof}
    The matrix $M := \lambda^2I-A_{\dot G}^2$ is a $\mathbb Z$-matrix with spectrum $\{[\lambda^2]^{2},[0]^{2m-4}\}$ and all diagonal entries equal to $0$, $1$, or $2$. 
    Partition $\{1,2,\dots,n\}$ in to parts $V_0$, $V_1$, $V_2$, where each $V_i = \{ j \; |\; M[j,j] = i\}$.
     By Lemma~\ref{lem:eigenspace}, $M$ is switching isomorphic to $O_{d_0} \oplus J_{\lambda^2} \oplus J_{\lambda^2}$ or
    \[
O_{d_0} \oplus \begin{bmatrix}
    2J_{d_2/2} & O & J & J \\
    O & 2J_{d_2/2} & J & -J \\
    J & J & J_{d_1/2} & O \\
    J & -J & O & J_{d_1/2}
\end{bmatrix},
\]
    where $d_i$ denotes the number of vertices of $G$ of degree $\lambda^2-i$.
    In each case, we observe that there are two mutually orthogonal $\{0,\pm 1\}$-vectors $\mathbf x$ and $\mathbf y$ in the $\lambda^2$-eigenspace $\mathcal E$ of $M$.
    Furthermore, we can assume that $\mathbf x^\transpose \mathbf x = \lambda^2$ and that the set $V_2$ is contained in the support of $\mathbf x$.
    Now form the matrix
    \[
    B = \begin{bmatrix} 0 & \mathbf x^\transpose \\
                        \mathbf x & A_{\dot G}
    \end{bmatrix},
    \]
    which is the adjacency matrix of a signed graph $\dot H$.
    Note that $( 0, \mathbf v^\transpose)^\transpose$ is a $\pm \lambda$-eigenvector for $B$ when $\mathbf v$ is a $\pm \lambda$-eigenvector of $A$.
    Furthermore, $(0, \mathbf w^\transpose)^\transpose$ is in the nullspace of $B$ where $\mathbf w$ is a vector in the nullspace of $A$ that is orthogonal to $\mathbf x$.
    Since the trace of $B$ is $0$, we see that $B$ has spectrum $\{[-\lambda]^{m-2},[-\mu]^1,[0]^1,[\mu]^1,[\lambda]^{m-2}\}$, for some $\mu \in \mathbb R$.
    Since the trace of $B^2$ is $(2m-2)\lambda^2$, it follows that $\mu = \lambda$.
    It remains to check the degrees of $\dot H$ are $\lambda^2$ and $\lambda^2-1$. This follows since the support of $\mathbf x$ contains the set $V_2$.
\end{proof}

The next example shows that there exist signed graphs that possess the spectrum $\{[-\lambda]^{m-2},[0]^2,[\lambda]^{m-2}\}$ such that every vertex has degree $\lambda^2$, $\lambda^2-1$, or $\lambda^2-2$ but that are not induced subgraphs of a signed graph with spectrum $\{[-\lambda]^{m},[\lambda]^{m}\}$.

\begin{example}
\label{ex:ce}
Let $\dot G$ be a signed graph with underlying graph $G = K_{1,3}$.
Then $\dot G$ has spectrum $\{ [-\sqrt{3}]^1,[0]^2, [\sqrt{3}]^1\}$.
However, $\dot G$ is not an induced subgraph of a signed graph with spectrum $\{ [-\sqrt{3}]^3, [\sqrt{3}]^3 \}$.
Note that $3I-A_{\dot G}^2$ is switching isomorphic to 
\[  O_{1} \oplus \begin{bmatrix}
    2 & 1 &  1 \\
    1 & 2 & -1 \\
    1 & -1 & 2
\end{bmatrix}.
\]
\end{example}

Example~\ref{ex:ce} is the only example we know of a signed graph $\dot G$ having spectrum $\{[-\lambda]^{m-2},[0]^2,[\lambda]^{m-2}\}$ such that $\lambda^2 I - A_{\dot G}^2$ is switching isomorphic to
        \[
O_{2m-2-\lambda^2} \oplus \begin{bmatrix}
    2 & 1 &  1 \\
    1 & 2 & -1 \\
    1 & -1 & 2
\end{bmatrix} \otimes J_{\lambda^2/3}.
\]
Thus we ask the following question.

\begin{question}
Let $\dot G$ be a signed graph with spectrum $\{[-\lambda]^{m-2},[0]^2,[\lambda]^{m-2}\}$.
Suppose the vertices of $G$ have degrees $\lambda^2$, $\lambda^2-1$, or $\lambda^2-2$.
Suppose that $G$ is not $K_{1,3}$.
Is it possible for $\lambda^2 I - A_{\dot G}^2$ to be switching isomorphic to
\[O_{2m-2-\lambda^2} \oplus \begin{bmatrix}
    2 & 1 &  1 \\
    1 & 2 & -1 \\
    1 & -1 & 2
\end{bmatrix} \otimes J_{\lambda^2/3}?
\]
\end{question}

The assumptions in Theorem~\ref{thm:extend40} are stronger than those of Theorem~\ref{thm:extend41}.
This is in order to avoid the case when $\lambda^2 I - A^2$ is switching isomorphic to $ O_{2m-2-\lambda^2} \oplus J_{\lambda^2} \oplus 2J_{\lambda^2/2}$, for which the resulting signed graph $\dot H$ does not have the required vertex degrees.
We do not know if such pathological signed graphs exist, hence we ask the following question.

\begin{question}
Let $\dot G$ be a signed graph with spectrum $\{[-\lambda]^{m-2},[0]^2,[\lambda]^{m-2}\}$.
Suppose the vertices of $G$ have degrees $\lambda^2$, $\lambda^2-1$, or $\lambda^2-2$.
Is it possible for $\lambda^2 I - A_{\dot G}^2$ to be switching isomorphic to $ O_{2m-2-\lambda^2} \oplus J_{\lambda^2} \oplus 2J_{\lambda^2/2}$?
\end{question}




\section{Signed $(0,2)$-graphs with a symmetric spectrum of three or four eigenvalues}
\label{sec:few}

In contrast to those with a symmetric spectrum of two eigenvalues, in this section we show that the variety of $(0,2)$-graphs with a symmetric spectrum of three or four eigenvalues is severely limited.
The main result of this section is that every signed $(0,2)$-graph with a symmetric spectrum of three eigenvalues has the complete bipartite graph $K_{2,2}$ as its underlying graph.
We also classify the signed  $(0,2)$-graphs having a particular symmetric spectrum of four eigenvalues.

    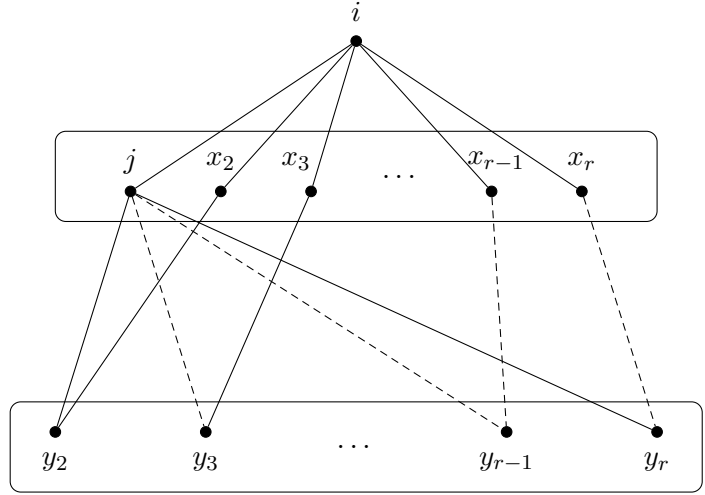
\begin{figure}[htbp]
        \centering
        \begin{center}
    \begin{tikzpicture}[scale=2, auto]
\begin{scope}
    \node[vertex] (i) at (-1,3) {};
    \node at (-1,3.2) {$i$};
	\foreach \pos/\name in {{(-2.5,2)/y1}, {(-1.9,2)/y2}, {(-1.3,2)/y3}, {(-0.1,2)/yk1}, {(0.5,2)/ys}}
		\node[vertex] (\name) at \pos {};
	\foreach \pos/\name in {{(-2.5,2.2)/j}, {(-1.9,2.2)/x_2}, {(-1.4,2.2)/x_3}, {(-0.07,2.2)/x_{r-1}}, {(0.5,2.2)/x_{r}}}
		\node at \pos {$\name$};
	\node at (-0.7,2.1) {$\dots$};
	\draw[rounded corners] (-3, 1.8) rectangle (1, 2.4) {};
	\foreach \pos/\name in {{(-3,0.4)/yn1}, {(-2,0.4)/yn2}, {(0,0.4)/yin1}, {(1,0.4)/yin2}}
		\node[vertex] (\name) at \pos {};
	\foreach \pos/\name in {{(-3,0.2)/y_2}, {(-2,0.2)/y_3}, {(0,0.2)/y_{r-1}}, {(1,0.2)/y_{r}}}
		\node at \pos {$\name$};
	\node at (-1,0.3) {$\dots$};
	\draw[rounded corners] (-3.3, -0) rectangle (1.3, 0.6) {};
	\foreach \edgetype/\source/ \dest in {pedge/i/y1, pedge/i/y2, pedge/i/y3, pedge/i/yk1, pedge/i/ys, pedge/y1/yn1, nedge/y1/yn2, nedge/y1/yin1, pedge/y1/yin2, pedge/y2/yn1, pedge/y3/yn2, nedge/yk1/yin1, nedge/ys/yin2}
	\path[\edgetype] (\source) -- (\dest);
\end{scope}
\end{tikzpicture}
\end{center}
        \caption{The vertices $i$, $j$, and their neighbours.}
        \label{fig:diag1}
    \end{figure}

\begin{theorem}
    Let $\dot G = (G, \sigma)$ be a signed $(0,2)$-graph with spectrum $\{[-\lambda]^m,[0]^d,[\lambda]^m\}$.
    Then $G$ is the complete bipartite graph $K_{2,2}$.
\end{theorem}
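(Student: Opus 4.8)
The plan is to reduce to signed rectagraphs, then to force the vertex degree to be $2$.

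By Theorem~\ref{thm:trianglefree} the hypotheses force $\dot G$ to be a signed rectagraph, so $G$ is connected, triangle-free, and (by Lemma~\ref{lem:reg}) $r$-regular; write $A = A_{\dot G}$. The minimal polynomial of $A$ is $x^3 - \lambda^2 x$, so $A^3 = \lambda^2 A$. Since $G$ is triangle-free, adjacent vertices have no common neighbour, so $(A^2)_{uv} = 0$ whenever $u \sim v$; and since $G$ is a $(0,2)$-graph, distinct non-adjacent vertices have $0$ or $2$ common neighbours, so $(A^2)_{uv} \in \{0,\pm 2\}$ there. Hence $A^2 = rI + 2N$ for a symmetric $\{0,\pm 1\}$-matrix $N$ with zero diagonal, supported exactly on those distance-$2$ pairs whose unique quadrangle is balanced. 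From $A^3 = \lambda^2 A$ the eigenvalues of $A^2$ are $\lambda^2$ (multiplicity $2m$) and $0$ (multiplicity $d \geqslant 1$), so $N$ has precisely the two eigenvalues $\tfrac12(\lambda^2 - r)$ and $-\tfrac r2$. As $N$ is an integer matrix and $d \geqslant 1$, both $\tfrac r2$ and $\tfrac12(\lambda^2 - r)$ are rational algebraic integers, hence integers, so $r$ and $\lambda^2$ are even; and since $A^2$ is positive semidefinite with constant diagonal $r$ and top eigenvalue $\lambda^2$, the case $\lambda^2 = r$ would force $A^2 = \lambda^2 I$ and $d = 0$, so in fact $\lambda^2 \geqslant r+2$. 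In particular $r$ is even (which disposes of all odd $r$), and if $r = 2$ then $G$ is a triangle-free $2$-regular $(0,2)$-graph, which must be $C_4 = K_{2,2}$ — exactly as required, noting that among the two signings of $C_4$ precisely the balanced one has $0$ in its spectrum.

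It remains to exclude even $r \geqslant 4$. First I would record two equivalent reformulations: given $A^2 = rI + 2N$, the identity $A^3 = \lambda^2 A$ is equivalent to $AN = \tfrac12(\lambda^2 - r)A$; and $N$, having just two eigenvalues, is the adjacency matrix of a regular signed graph satisfying $N^2 = \tfrac12(\lambda^2 - 2r)N + \tfrac14 r(\lambda^2 - r)I$, so comparing diagonal entries shows the graph $H$ underlying $N$ is $\tfrac14 r(\lambda^2 - r)$-regular. (It is also worth noting separately that if $\dot G$ is balanced then $G$ is an unsigned rectagraph with spectrum $\{r,0,-r\}$, which forces $G = K_{r,r}$ and hence $r = 2$; so one may assume $\dot G$ is unbalanced.) Then I would work in the local picture of Figure~\ref{fig:diag1}: fix an edge $ij$, switch so that every edge at $i$ is positive, put $N(i) = \{j, x_2, \dots, x_r\}$, and use that $x_k$ and $j$ have exactly the two common neighbours $i$ and some $y_k$, with $k \mapsto y_k$ a bijection onto $N(j) \setminus \{i\}$, so that $\{i,j\} \cup \{x_k\} \cup \{y_k\}$ induces the displayed $2r$-vertex subgraph. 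Expanding the entries of $AN = \tfrac12(\lambda^2-r)A$ and of the quadratic for $N$ indexed by $i$, $j$, the $x_k$ and the $y_k$ — using the rectagraph pairing to control which quadrangles and distance-$2$ neighbours occur — should pin down how many of the $r-1$ quadrangles through $ij$ are balanced and rigidly constrain the signs $\sigma(x_k y_k)$ and the adjacencies among the $y_k$, and I expect these constraints to be incompatible for every even $r \geqslant 4$.

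The step I anticipate to be the main obstacle is exactly this last one: extracting a contradiction from the local equations uniformly in $r$. There is no a priori bound on the order of a rectagraph of a given degree, so one cannot finish by a finite case analysis as in Section~\ref{sec:small}; the contradiction must come from the structure of $N$ (its two eigenvalues together with the $(0,2)$-adjacency pattern) rather than from bounding $n$. Once even $r \geqslant 4$ has been ruled out, only $r = 2$ survives, giving $G = K_{2,2}$.
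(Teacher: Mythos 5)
Your first half is sound and essentially matches the paper: the reduction to a signed rectagraph via Theorem~\ref{thm:trianglefree}, regularity, $A^3=\lambda^2A$, the decomposition $A^2=rI+2N$ with $N$ a $\{0,\pm1\}$-matrix recording the positive quadrangles, and the endgame $r=2\Rightarrow G=C_4=K_{2,2}$ are all correct. Your integrality observation (the eigenvalues $\tfrac12(\lambda^2-r)$ and $-\tfrac r2$ of $N$ must be integers, so $r$ is even) is a nice touch the paper does not make. But the proof is not complete: the entire content of the theorem for $r\geqslant 3$ is deferred to the sentence ``I expect these constraints to be incompatible,'' and the global spectral data you have assembled genuinely does not suffice to produce the contradiction. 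For instance, $r=4$, $\lambda^2=6$ gives $N$ with eigenvalues $1$ and $-2$, $\deg_H=2$, i.e.\ $N$ a disjoint union of negative triangles, and $2m=2d$; nothing in your list of consequences (parity, $r+2\leqslant\lambda^2\leqslant 3r-2$, the quadratic for $N$, $AN=\theta_1A$) rules this out. So the missing step is not a routine verification but the heart of the proof.

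What the paper does instead is purely local and short. Fix an edge $ij$, switch all edges at $i$ positive, and let $q_+$ denote the number of positive quadrangles through $ij$. The $(i,j)$-entry of $A^3=\lambda^2A$ gives $\lambda^2=r+2q_+$ (equivalently, your relation $(AN)_{ij}=\tfrac12(\lambda^2-r)A_{ij}$). The paper then extracts a \emph{second} relation, $\lambda^2r=r^2+4q_+$, from the $(i,i)$-entry of $A^4=\lambda^2A^2$, and the two together give $(\lambda^2-r)(r-2)=0$; since $d>0$ forces $\lambda^2\neq r$, one gets $r=2$. That second relation is the idea you are missing. Be aware, though, that it is delicate at exactly the point where you got stuck: the paper's count of $(A^4)_{ii}$ rests on the assertion that the only $\pm2$ entries in row $i$ of $A^2$ occur at the vertices $y_m$ (i.e.\ that every positive quadrangle through $i$ passes through $j$). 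Your own identity $\deg_H(i)=\tfrac14r(\lambda^2-r)$ shows that the unrestricted count $(A^4)_{ii}=r^2+4\deg_H(i)$ is automatically equal to $\lambda^2r$ and carries no information, so that restriction is doing all the work and would itself need justification before either your argument or a transcription of the paper's could be considered complete.
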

\begin{proof}
    Let $A$ be the adjacency matrix of $\dot G$.
    Then $A^3 = \lambda^2A$.
    By Lemma~\ref{lem:reg}, the underlying graph $G$ of $\dot G$ is regular with degree $r$ (say).
    Consider vertices $i \sim j$, let $x_1 = j$, $x_2$, $\dots$, $x_r$ denote the neighbours of $i$ and  $y_1 = i$, $y_2$, $\dots$, $y_r$ denote the neighbours of $j$.
    By Theorem~\ref{thm:trianglefree}, $\dot G$ is triangle free.
    Therefore, the sets $\{x_1,x_2,\dots,x_r\}$ and $\{y_1,y_2,\dots,y_r\}$ have no intersection.
    Without loss of generality, we can assume that $\sigma(i,x_l) = 1$ for all $l \in \{1,2,\dots,r\}$ (see Figure~\ref{fig:diag1}).
    Set 
    $$q_{\pm} := |\{ l \; | \; l \in \{2, 3, \dots r\} \text{ and } \sigma(j,y_l)\sigma(y_l,x_l) = \pm 1\}|.$$
    Then $q_{+} + q_{-} = {r}-1$.
    Observe that the $(i,j)$-entry of $A^3$ can be expressed as $2r-1+q_+ -q_- = r+2q_+$.
    On the other hand, the $(i,j)$-entry of $A^3$ equals $\lambda^2$.
    Thus $\lambda^2 = r + 2q_+$.
    
    Next consider the entries of $A^2$ along the $i$th row.
    Observe that, for $l \ne i$, the $(i,l)$-entry of $A^2$ is equal to $\pm 2$ only if $l = y_m$ for some $m \in \{2,3,\dots,r\}$.
    Furthermore, each diagonal entry of $A^2$ is equal to $r$.
    Since $A^4 = \lambda^2A^2$, we have that $\lambda^2 r = r^2 + 4q_+$.
    Combining this with $\lambda^2 = r + 2q_+$ yields $(\lambda^2-r)(r-2) = 0$.
    
    Since $d > 0$, we must have $r \ne \lambda^2$, otherwise $A^2 = rI$, a contradiction.
    Thus we must have $r = 2$.
    This implies the underlying graph of $\dot G$ must be a cycle, since $G$ is connected by definition.
    And the $4$-cycle $K_{2,2}$ is the only cycle that is a $(0,2)$-graph.
\end{proof}

Next we consider signed $(0,2)$-graphs having spectrum $\{[-\lambda]^m,[-\mu]^1,[\mu]^1,[\lambda]^m\}$, where $\mu \geqslant 1$.
We will require the following lemma.

\begin{lemma}
    \label{lem:struc2}
   Suppose $n \geqslant 3$ and $M$ is  an $n\times n$  $\mathbb Z$-matrix with a constant diagonal, off-diagonal entries in $\{0,\pm 2\}$ and spectrum $\{[\lambda]^2,[0]^{n-2}\}$ for some $\lambda > 0$.
    Then $M$ is switching isomorphic to $2J_{n/2} \oplus 2J_{n/2}$ and $\lambda = n$.
\end{lemma}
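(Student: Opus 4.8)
The plan is to reduce everything, via Lemma~\ref{lem:eigenspace}, to a single numerical fact: that the common diagonal entry of $M$ equals $2$ (equivalently, $\lambda=n$). To set up, note that (as implicitly throughout, and as in Lemma~\ref{lem:eigenspace}) $M$ is symmetric, so having spectrum $\{[\lambda]^2,[0]^{n-2}\}$ with $\lambda>0$ means $M$ is positive semidefinite of rank $2$ and $M^2=\lambda M$. Its characteristic polynomial is $x^{n-2}(x-\lambda)^2$, whose integrality forces $\lambda$ to be a positive integer, and writing $c$ for the common diagonal entry we get $\tr M=nc=2\lambda$, so $c=2\lambda/n$ is a positive integer. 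The goal becomes $c=2$.

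The core of the argument is to bound $c$. Put $B:=\tfrac12(M-cI)$; since $M-cI$ has zero diagonal and off-diagonal entries in $\{0,\pm2\}$, this $B$ is the adjacency matrix of a signed graph of order $n$, with eigenvalues $\theta_1:=\tfrac12(\lambda-c)$ of multiplicity $2$ and $\theta_2:=-\tfrac12 c$ of multiplicity $n-2$. These are algebraic integers, so $c$ must be even and hence $c\geqslant 2$. Because $B$ has exactly two distinct eigenvalues, $B^2=(\theta_1+\theta_2)B-\theta_1\theta_2 I$; reading off a diagonal entry (and using $B_{ii}=0$, $B_{ij}\in\{0,\pm1\}$) shows $B$ is regular of degree $-\theta_1\theta_2$, while $\tr B=0$ gives $\theta_1=\tfrac14(n-2)c$, so this degree equals $\tfrac18(n-2)c^2$. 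A regular graph of order $n$ has degree at most $n-1$, hence $(n-2)c^2\leqslant 8(n-1)$, i.e. $c^2\leqslant 8+\tfrac{8}{n-2}$. For $n\geqslant 4$ this yields $c^2\leqslant 12$, which with $c$ even and $c\geqslant 2$ forces $c=2$, and therefore $\lambda=nc/2=n$; note too that $c=2$ makes the degree $\tfrac12(n-2)$ of $B$ an integer, so $n$ is automatically even.

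It remains to identify $M$ itself. With $c=2$ every diagonal entry of $M$ lies in $\{0,1,2\}$, so Lemma~\ref{lem:eigenspace} applies and $M$ is switching isomorphic to one of the matrices (a)--(e) listed there. The constant diagonal $2$ forces $d_0=d_1=0$; this rules out (a) and (b) (which have diagonal entries equal to $0$ or $1$), while (d) is excluded since it has off-diagonal entries equal to $\pm1\notin\{0,\pm2\}$. With $d_0=d_1=0$, both (c) and (e) collapse to $2J_{n/2}\oplus 2J_{n/2}$, the vanishing $O$-block forcing $\lambda=n$, as already established. This completes the proof.

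The main obstacle is the step $c=2$: the degree inequality is decisive only for $n\geqslant 4$, and the case $n=3$ would need to be examined directly. This is harmless for the paper's purposes, since there $M$ arises from a signed $(0,2)$-graph on $n=2m+2\geqslant 4$ vertices, so $n$ is even. (One can also bypass Lemma~\ref{lem:eigenspace} in the last step: once $c=2$, the matrix $B=\tfrac12(M-2I)$ has least eigenvalue $-1$, so $B+I$ is the Gram matrix of unit vectors in the plane with pairwise inner products in $\{0,\pm1\}$; such a signed graph is switching isomorphic to a disjoint union of complete graphs, and regularity together with the eigenvalue multiplicities pins it down to two copies of $K_{n/2}$, whence $M=2(I+B)=2J_{n/2}\oplus 2J_{n/2}$.)
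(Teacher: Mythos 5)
Your proof is correct for $n \geqslant 4$, and it reaches the key fact (the common diagonal entry equals $2$) by a genuinely different route from the paper. The paper observes that, since $M$ is positive semidefinite of rank $2$, interlacing forces every $3\times 3$ principal submatrix to be singular; it then lists the five possible such submatrices up to switching, determines for which values of the diagonal entry $d$ each is singular, and finishes with Lemma~\ref{lem:eigenspace} exactly as you do. Your detour through the two-eigenvalue signed graph $B=\tfrac12(M-cI)$ --- using integrality of the eigenvalue $-c/2$ to get $c$ even, and the regularity bound $\tfrac18(n-2)c^2\leqslant n-1$ to cap $c$ --- is a clean arithmetic substitute for that case analysis. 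One remark is worth adding: the residual case $n=3$, $c=4$ that your inequality leaves open is not merely unexamined, it is a genuine counterexample to the lemma as stated. Indeed,
\[
M=\begin{bmatrix} 4 & 2 & 2\\ 2 & 4 & -2\\ 2 & -2 & 4\end{bmatrix}
\]
has constant diagonal, off-diagonal entries in $\{0,\pm2\}$ and spectrum $\{[6]^2,[0]^1\}$, yet $\lambda\neq n$. The paper's proof misses this because it asserts that its fifth listed $3\times3$ matrix (the unbalanced triangle) is singular only when $d=2$, whereas its determinant is $(d-4)(d+2)^2$, so for $d>0$ it is singular (and positive semidefinite of rank $2$) precisely when $d=4$. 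For $n\geqslant 4$ the case $c=4$ is still impossible --- your own degree bound gives $2(n-2)\leqslant n-1$, i.e.\ $n\leqslant 3$ --- and, as you note, the application in the paper only ever uses $n=2m+2\geqslant 4$; but the hypothesis $n\geqslant 3$ in the statement should really be $n\geqslant 4$.
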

\begin{proof}
    Suppose each diagonal entry of $M$ is $d$.
    First note that $d > 0$, since $\lambda > 0$.
    By interlacing, every $3\times 3$ principal submatrix must be singular.
    Up to switching isomorphism, there are five possibilities for such submatrices:
    \begin{align*}
            \begin{bmatrix}
        d & 0 & 0 \\
        0 & d & 0 \\
        0 & 0 & d
    \end{bmatrix},
    \quad
    \begin{bmatrix}
        d & 2 & 0 \\
        2 & d & 0 \\
        0 & 0 & d
    \end{bmatrix},
        \quad
    \begin{bmatrix}
        d & 2 & 2 \\
        2 & d & 0 \\
        2 & 0 & d
    \end{bmatrix},
    \quad
    \begin{bmatrix}
        d & 2 & 2 \\
        2 & d & 2 \\
        2 & 2 & d
    \end{bmatrix},
        \quad
    \begin{bmatrix}
        d & 2 & 2 \\
        2 & d & -2 \\
        2 & -2 & d
    \end{bmatrix}.
    \end{align*}
    Out of these, the ones that are singular are 
        \begin{align*}
    \begin{bmatrix}
        d & 2 & 0 \\
        2 & d & 0 \\
        0 & 0 & d
    \end{bmatrix},
        \quad
    \begin{bmatrix}
        d & 2 & 2 \\
        2 & d & 2 \\
        2 & 2 & d
    \end{bmatrix},
        \quad
    \begin{bmatrix}
        d & 2 & 2 \\
        2 & d & -2 \\
        2 & -2 & d
    \end{bmatrix}.
    \end{align*}
    Furthermore, they are only singular when $d =2$.
    Thus, $d$ must be equal to $2$ and the lemma follows from Lemma~\ref{lem:eigenspace}.
\end{proof}



Lastly, we show that the signed tetrahedron $\mathcal T$ (Figure~\ref{fig:T}) is the only signed $(0,2)$-graph with spectrum $\{[-\lambda]^m,[-\mu]^1,[\mu]^1,[\lambda]^m\}$, where {$1 \leqslant \mu < \lambda$}.

\begin{theorem}
    Let $\dot G$ be a signed $(0,2)$-graph with spectrum $\{[-\lambda]^m,[-\mu]^1,[\mu]^1,[\lambda]^m\}$,
    where {$1 \leqslant \mu < \lambda$}.
    Then the underlying graph $G$ is the complete graph $K_4$.
\end{theorem}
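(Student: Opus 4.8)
The plan is to extract enough arithmetic from the spectrum to force $G$ to be a complete graph, and then to apply the $(0,2)$-condition one last time to bound the number of vertices. Write $A = A_{\dot G}$ and $n = 2m+2 = |V(G)|$; note $m \geqslant 1$ (otherwise $\lambda$ does not occur), so $n \geqslant 4$, and $\dot G$ has exactly four distinct eigenvalues since $1 \leqslant \mu < \lambda$. By Lemma~\ref{lem:reg} the underlying graph $G$ is regular, say of degree $r$, so $A^2$ has constant diagonal $r$; and since any two distinct vertices of $\dot G$ have $0$ or $2$ common neighbours, each off-diagonal entry of $A^2$ (a signed count of common neighbours) lies in $\{0,\pm 2\}$. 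I would then pass to $M := \lambda^2 I - A^2$, which has constant diagonal $\lambda^2 - r$, off-diagonal entries in $\{0,\pm 2\}$, and spectrum $\{[\lambda^2-\mu^2]^2,[0]^{n-2}\}$ with $\lambda^2-\mu^2 > 0$.

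Applying Lemma~\ref{lem:struc2} to $M$ (valid since $n \geqslant 3$) yields that $M$ is switching isomorphic to $2J_{n/2}\oplus 2J_{n/2}$ and that $\lambda^2-\mu^2 = n$; in particular the (constant) diagonal of $M$ equals $2$, so $r = \lambda^2 - 2 = \mu^2 + n - 2$. Since $G$ is a simple graph on $n$ vertices, $r \leqslant n-1$, which forces $\mu^2 \leqslant 1$; as $\mu \geqslant 1$ this gives $\mu = 1$ and $r = n-1$, i.e.\ $G = K_n$. Finally, in $K_n$ every (adjacent) pair of vertices has $n-2$ common neighbours, so the $(0,2)$-condition forces $n-2 \in \{0,2\}$, i.e.\ $n \leqslant 4$; combined with $n \geqslant 4$ this gives $n = 4$ and $G = K_4$.

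The step that needs care is the application of Lemma~\ref{lem:struc2}, which requires $M$ to be an integer matrix, equivalently $\lambda^2 \in \mathbb Z$. Since the spectrum of $\dot G$ is symmetric, $\det(xI-A) = P(x^2)$ where $P(y) = (y-\lambda^2)^m(y-\mu^2)$ is a monic polynomial in $\mathbb Z[y]$; when $m \geqslant 2$ the simple root $\mu^2$ cannot be conjugate to $\lambda^2$ (whose multiplicity is $m \neq 1$), so $\mu^2$ has linear minimal polynomial over $\mathbb Q$, whence $\mu^2 \in \mathbb Z$ and then $\lambda^2 \in \mathbb Z$ too; in fact these cases turn out vacuous, since the argument above then gives $n = 4$, contradicting $n = 2m+2 \geqslant 6$. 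The residual case $m = 1$, i.e.\ $n = 4$, I would dispose of directly: the only connected $(0,2)$-graphs on four vertices are $C_4$ and $K_4$, and no signing of $C_4$ has four distinct eigenvalues (a balanced $C_4$ has spectrum $\{[-2]^1,[0]^2,[2]^1\}$, and an unbalanced one has spectrum $\{[-\sqrt 2]^2,[\sqrt 2]^2\}$), so again $G = K_4$. I expect this integrality bookkeeping to be the only genuine obstacle; the rest is a short chain of trace and eigenvalue computations once Lemma~\ref{lem:struc2} is available.
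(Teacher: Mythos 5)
Your proposal is correct and follows essentially the same route as the paper's own proof: form $M=\lambda^2I-A_{\dot G}^2$, apply Lemma~\ref{lem:struc2} to get $r=\lambda^2-2$ and $n=\lambda^2-\mu^2$, and combine $r\leqslant n-1$ with $\mu^2\geqslant 1$ to force $G$ complete and hence $G=K_4$. Your additional bookkeeping on the integrality of $\lambda^2$ (splitting off the $m=1$ case and checking $C_4$ directly) patches a point the paper passes over silently when it asserts that $M$ is a $\mathbb Z$-matrix, but it does not change the underlying argument.
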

\begin{proof}
    Let $A$ be the adjacency matrix of $\dot G$.
    By Lemma~\ref{lem:reg}, the underlying graph $G$ of $\dot G$ is regular with degree $r$.
    The matrix $M := \lambda^2I-A^2$ is a $\mathbb Z$-matrix with all diagonal entries equal to $\lambda^2-r$, off-diagonal entries in $\{0,\pm 2\}$, and spectrum $\{[\lambda^2-\mu^2]^2,[0]^{n-2}\}$. 
    By Lemma~\ref{lem:struc2}, $r = \lambda^2-2$ and $n = \lambda^2-\mu^2$.
    Since $r \leqslant n-1$ and $\mu^2 \geqslant 1$, we must have $r=n-1$.
    Thus $G$ is complete, and the only complete $(0,2)$-graph is $K_4$.
\end{proof}


\section*{Acknowledgements}

The first author was supported in part by the Singapore Ministry of Education Academic Research Fund (Tier 1); grant number RG21/20.
The second author is partially supported by the Serbian Ministry of Education, Science and Technological Development via the University of Belgrade.


\end{document}